\newtheorem{theorem}{Theorem}[section]
\newtheorem{corollary}{Corollary}[section]
\newtheorem{definition}{Definition}[section]
\newtheorem{proposition}{Proposition}[section]
\newtheorem{problem}{Problem}
\newtheorem{conjecture}{Conjecture}
\newtheorem{remark}[theorem]{Remark}
\numberwithin{equation}{section}
\begin{document}

\thispagestyle{empty}
\setcounter{page}{1}

\noindent
$~$ \\ [.3in]

\begin{center}
{\large\bf The approximate fixed point property in Hausdorff topological vector spaces and applications}

\vskip.20in

Cleon S. Barroso$^{1}$ \\[2mm]
{\footnotesize
$^{1}$Departamento de Matem\'atica\\
Universidade Federal do Cear\'a, Campus do Pici, Bl. 914, 60455-760, Brazil\\

{\it e}-mail: cleonbar@mat.ufc.br
}
\end{center}

{\footnotesize
\noindent
{\bf Abstract.}  Let $\mathscr{C}$ be a compact convex subset of a Hausdorff
topological vector space $(\mathcal{E},\tau)$ and $\sigma$ another
Hausdorff vector topology in $\mathcal{E}$. We establish an approximate
fixed point result for sequentially continuous maps $f\colon
(\mathscr{C},\sigma)\to (\mathscr{C},\tau)$. As application, we
obtain the weak-approximate fixed point property for demicontinuous
self-mapping weakly compact convex sets in general Banach spaces and use this to
prove new results in asymptotic fixed point theory. These results
are also applied to study the existence of limiting-weak solutions for differential
equations in reflexive Banach spaces.\\
{\bf AMS (MOS) subject classification:} 46H03, 47H10.\\
\noindent
{\it Keywords}: Approximate fixed point property, Hausdorff topological
vector spaces, Asymptotic fixed point theorem, Differential equations, Reflexive spaces.\\
{\it The author is supported by CNPq grants.}
}
\vskip.2in

\section{Introduction}\label{sec:intro}

In fixed point theory, one of the main directions of investigation
concerns the study of the fixed point property in topological spaces.
Recall that a topological space $\mathscr{C}$ is said to have the
fixed point property if every continuous map $f\colon
\mathscr{C}\to\mathscr{C}$ has a fixed point. The major contribution to this subject has been provided by Tychonoff. Every  compact convex subset of a locally convex space has the fixed point property. Historically, this celebrated result first appeared in 1911 with Brouwer for finite dimensional spaces, and after in 1930 with Schauder for general Banach spaces. As early as 1935, Tychonoff was able to prove the locally convex case. Since then, it has shown to be an extremely useful tool to prove existence results for a great variety of problems in nonlinear analysis.

In this paper we deal with another important and current branch of fixed point theory, namely the study of the approximate fixed point property. In this topic, the focus is primarily on the possibility of proving the existence of a sequence $\{x\sb n\}$ in $\mathscr{C}$ such that $x\sb n-f(x\sb n)\to 0$. The idea of analysing the approximation of fixed points of a continuous map is not new. Apparently this was first exploited by H. Scarf in his 1967 paper \cite{Scarf}, where a constructive method of computing fixed points of continuous mappings of a simplex into itself was described. Nowaday, the interest in approximate fixed point results arise naturally in the study of some problems in economics and game theory, including for example the Nash equilibrium approximation in games, see \cite{Tijs} and references therein.  It is the purpose of the present paper to study two problems concerning approximate fixed point property on an ambient space with different topologies. The first one can be formulated as follows:

\begin{problem}\label{prob:1} Let $\mathscr{C}$ be a compact
convex subset of a Hausdorff topological vector space
$(\mathcal{E},\tau)$ and $\sigma$ another Hausdorff vector topology in
$\mathcal{E}$. Then, every sequentially continuous mapping $f\colon
(\mathscr{C},\sigma)\to (\mathscr{C},\tau)$ has a $\tau$-approximate
fixed point sequence, that is, a sequence $\{x\sb n\}$ in
$\mathscr{C}$ such that $x\sb n-f(x\sb n)\stackrel{\tau}{\to}
0$.
\end{problem}

We next emphasize similarities with earlier results. In the case when $\sigma=\tau$ some positive answers are already well known. For instance, if
$\mathscr{C}$ is a bounded closed and convex subset of a Banach space
$(\mathcal{E},\|\cdot\|)$, then every non-expanding mapping $f\colon\mathscr{C}\to\mathscr{C}$ has a
strong approximate fixed point sequence, that is to say a sequence $\{x\sb
n\}$ in $\mathscr{C}$ such that $ \lim\sb{n\to\infty}\|x\sb
n-f(x\sb n)\|=0$.  In some cases, this property remains true even without imposing boundedness on the set $\mathscr{C}$, see Reich \cite{Reich3} and the references therein. Dobrowolski \cite{Dobrowolski1} showed that if
$(\mathcal{E},|\cdot|)$ is a separable metric linear space and
$\mathscr{C}$ is a compact convex subset of $\mathcal{E}$ with the
simplicial approximation property, then every $|\cdot|$-continuous mapping
$f\colon \mathscr{C}\to\mathscr{C}$ has an $|\cdot|$-approximate
fixed point sequence. In \cite{Idzik} Idzik established a result on the existence of $\gamma$-almost fixed points for single-valued functions in the context of Hausdorff topological vector spaces.

We note, however, that it is not immediately clear what happens if $\sigma\neq \tau$. In this context, it is worthwhile to remark that $\tau$-convergence in Problem \ref{prob:1}
is the most natural way to
approximate fixed points for $f$. The reason for this is that there are situations where $\sigma$ is finer than $\tau$ and $f$ has no $\sigma$-approximate fixed points. As an example, consider the case where $(\mathcal{E},\|\cdot\|)$ is a Banach space. Recall
that a mapping $f\colon \mathscr{C}\to\mathscr{C}$ is called
demicontinuous if it maps strongly convergent sequences into
weakly convergent sequences. In this case, given a closed convex
subset $\mathscr{C}$  of $\mathcal{E}$ being
noncompact in norm we cannot hope, in general, to obtain
strong approximate fixed-point sequences for $f$.
This basic conclusion is, for instance, a consequence of the following remarkable
result due to Lin and Sternfeld \cite{Lin}.

\begin{theorem}\label{trm:Int1} Let $\mathscr{C}$ be a noncompact
closed and convex subset of a Banach space $\mathcal{E}$. Then there
is a Lipschitz map $f\colon \mathscr{C}\to\mathscr{C}$ without
approximate fixed points; i.e, there exists a positive number
$\delta$ such that $\|x-f(x)\|>\delta$ for all $x\in \mathscr{C}$.
\end{theorem}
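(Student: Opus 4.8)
The plan is to extract from noncompactness the only structural feature that matters here and then build the map on an explicit infinite-dimensional model sitting inside $\mathscr{C}$. First I would record the elementary reduction: since $\mathscr{C}$ is closed in the complete space $\mathcal{E}$ and is not compact, it cannot be totally bounded. Hence there exist $\varepsilon>0$ and a sequence $(x_n)_{n\ge1}$ in $\mathscr{C}$ with $\|x_n-x_m\|\ge\varepsilon$ for all $n\ne m$. This covers the bounded and unbounded cases uniformly, which is important because an unbounded closed convex set in a Banach space need not contain a ray, so one cannot simply reduce to a shift along a half-line. Everything downstream will be carried out on the closed bounded convex set $K=\overline{\mathrm{conv}}\{x_n:n\ge1\}\subseteq\mathscr{C}$, which is infinite-dimensional precisely because $(x_n)$ is $\varepsilon$-separated.

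Next I would isolate the engine. One must keep in mind that nonexpansive self-maps of a bounded convex set always admit approximate fixed points, so any candidate $f$ is forced to have Lipschitz constant strictly larger than $1$; the positive answer therefore has to come from a genuinely infinite-dimensional amplification phenomenon rather than from a naive index shift (the plain right shift on barycenters $\sum\lambda_n x_n\mapsto\sum\lambda_n x_{n+1}$ fails, since spreading the mass $\lambda_n=1/N$ over the first $N$ points drives the displacement to $0$). The cleanest source of such a phenomenon is the fact that the unit ball of an infinite-dimensional Banach space Lipschitz-retracts onto its unit sphere (Benyamini--Sternfeld), which already produces fixed-point-free Lipschitz self-maps. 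Accordingly I would realize a bi-Lipschitz copy of such a ball--sphere pair inside $K$, using the separated sequence to manufacture the requisite Lipschitz coordinate functionals, and then pull back the retraction $r$ onto the sphere.

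Finally, and this is where the real work lies, I would upgrade ``fixed-point-free'' to the uniform displacement $\inf_{x\in\mathscr{C}}\|x-f(x)\|\ge\delta>0$ demanded by the statement. Being merely free of fixed points, or even of approximate fixed points on a dense set, is strictly weaker than what is asserted. The plan is to compose the retraction with an amplifying, Goebel-type map that moves every point of the sphere a definite amount, trading a larger but finite Lipschitz constant for a strictly positive minimal displacement, and then to extend from the embedded ball to all of $\mathscr{C}$ by a Lipschitz retraction of $\mathscr{C}$ onto it, shrinking $\delta$ to absorb the retraction's Lipschitz constant. The main obstacle I anticipate is exactly this quantitative uniformity: I must guarantee that the displacement stays bounded below as one approaches the relative boundary of the model set — where, in infinite dimensions, continuity alone gives no control since norm convergence of the arguments cannot be assumed — all while keeping the Lipschitz constant finite and the image inside $\mathscr{C}$, and while working only from the raw $\varepsilon$-separation of $(x_n)$ rather than from any basis or unconditionality structure.
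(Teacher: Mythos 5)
First, for orientation: the paper itself contains no proof of this statement --- Theorem \ref{trm:Int1} is quoted from Lin and Sternfeld \cite{Lin} as motivation for Problem \ref{prob:2} --- so your attempt has to be judged on its own terms, and on those terms it has a genuine gap already at the opening reduction. Noncompactness of a closed subset of a Banach space does give an $\varepsilon$-separated sequence $(x_n)\subset\mathscr{C}$, but nothing lets you choose this sequence \emph{bounded}, and yet you immediately call $K=\overline{\mathrm{conv}}\{x_n\colon n\ge 1\}$ a ``closed bounded convex set'' that is ``infinite-dimensional precisely because $(x_n)$ is $\varepsilon$-separated.'' That implication is valid only for bounded sequences, and there are legitimate instances of the theorem in which no bounded separated sequence exists at all: $\mathscr{C}=[0,\infty)$ in $\mathcal{E}=\mathbb{R}$, or the cone $\{tq\colon t\ge 0,\ q\in Q\}$ over the Hilbert cube $Q$ in $\ell_2$. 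These sets are closed, convex and noncompact, but every bounded subset of them is totally bounded; hence every separated sequence in them is unbounded, $K$ may be a single ray, and --- decisively --- no bi-Lipschitz copy of an infinite-dimensional ball can exist anywhere inside $\mathscr{C}$ (such a copy would be a bounded, non-totally-bounded set). So the Benyamini--Sternfeld engine you intend to run inside $K$ cannot even be started there, and the claim that your construction ``covers the bounded and unbounded cases uniformly'' is false. The correct split is a dichotomy: either $\mathscr{C}$ contains a \emph{bounded} $\varepsilon$-separated sequence, or else every bounded closed piece $\mathscr{C}\cap \overline{B}(0,R)$ is compact; in the latter case $\mathscr{C}$ is unbounded, a compactness argument on unit chords produces a recession direction $v\ne 0$ with $\mathscr{C}+v\subseteq\mathscr{C}$, and the isometry $f(x)=x+v$ finishes with $\delta=\|v\|$ --- that is, exactly the ``shift along a half-line'' you dismissed is what handles the cases your $K$ misses.

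Second, even in the case where a bounded separated sequence does exist, the two bridges of your sandwich are postulated rather than proved, and neither is routine. (a) You need a bi-Lipschitz copy of the unit ball of an infinite-dimensional normed space inside $K$, ``manufactured'' from the separation alone; but separation provides no Lipschitz coordinate system --- the $x_n$ may be linearly dependent, and even when independent they need not admit uniformly bounded biorthogonal functionals --- and there is no general embedding theorem of this kind to cite. (b) Your final step requires a Lipschitz retraction of $\mathscr{C}$ onto the embedded ball. The displacement-transfer computation through a retraction is correct (if $r$ is $L$-Lipschitz and fixes $D$ pointwise, then $\|x-g(r(x))\|\ge \delta/L$ because $r$ fixes $g(r(x))\in D$), but the retraction itself is the issue: outside Hilbert space there is no nearest-point projection to fall back on, and the existence of Lipschitz retractions of a Banach space (or of $\mathscr{C}$) onto general closed bounded convex subsets is a well-known serious difficulty, not a quotable lemma. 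Finally, you explicitly defer the quantitative heart of the matter --- upgrading fixed-point-freeness to a uniform lower bound on $\|x-f(x)\|$ via a ``Goebel-type'' amplification --- as an ``anticipated obstacle.'' That uniform bound \emph{is} the content of the theorem; with it deferred and with the reduction above broken, what you have is a plausible research outline rather than a proof.
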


Therefore, in view of this important result, it is natural to
formulate the following variant of Problem \ref{prob:1} which is our second source of motivation.

\begin{problem}\label{prob:2} Let $\mathscr{C}$
be a weakly compact convex subset of a Banach space $\mathcal{E}$
and $f\colon \mathscr{C}\to\mathscr{C}$ a demicontinuous mapping.
Then $f$ has a weak-approximate fixed point sequence, that is, a
sequence $\{x\sb n\}$ in $\mathscr{C}$ such that $x\sb n-f(x\sb
n)\rightharpoonup 0$.
\end{problem}

These problems were the starting point for the research on which
this paper is based upon. Unfortunately, we do not know if they have been previously considered in the literature. However, although it seems to us be the first time that they are studied, we would mention that in the context of Problem \ref{prob:2} there is an intimate connection between approximate fixed points and demicontinuous pseudocontractions in Hilber spaces, see \cite[Lemma 1]{Moloney}.

Let us outline briefly the content. In
Section 2 we present the main result of this paper. By assuming
the existence of appropriate seminorms on $\mathcal{E}$ and using
the Schauder method for finding fixed points, in
Theorem \ref{trm:1}, we get a partial solution for Problem \ref{prob:1}. The crucial observation used in the proof is the fact that algebraic isomorphisms between Hausdorff topological vector spaces of the same finite dimension are, in fact, homeomorphisms. In Section 3, we present some of the theoretical implications of this
result. Among other things, we show that
if $\mathcal{E}$ is a Hausdorff locally convex space whose
topological dual space is weak-star separable,
then Problem \ref{prob:1} can be solved when $\mathscr{C}$ is a (weakly) compact convex
subset of $\mathcal{E}$, see Corollary \ref{cor:3}. By using this fact, we then are able to give a complete solution of
Problem \ref{prob:2} (cf. Theorem \ref{trm:SolPb2}). As a consequence, we provide a partial solution of a
conjecture due Steinlein in connection with the asymptotic fixed point theory in
reflexive Banach spaces. As an example to illustrate how these results can be applied in practice, in the last section we consider the problem of finding limiting-weak solutions for a class of ordinary differential equations in reflexive Banach spaces. Such equations are closely related to Peano's theorem in infinite dimensional spaces.

\section{Approximate Fixed Point Property}
In this section we shall establish our main result on approximate fixed points. As mentioned
earlier, it provides a partial solution for the Problem
\ref{prob:1}. Before going into details, let us introduce first some notation and basic definitions which we shall use in the sequel. Throughout this section we denote by $(\mathcal{E},\tau)$ a Hausdorff
topological vector space, and by $\mathscr{C}$ a nonempty subset of
$\mathcal{E}$.

\begin{definition}\label{def:1} A mapping $f$ of a topological space $X$ into a topological
space $Y$ is said to be sequentially continuous if, for every sequence $\{x\sb n\}$
which converges to a point $x$ in $X$, the sequence $\{f(x\sb n)\}$ converges to $f(x)$ in $Y$.
\end{definition}

It is clear that if $f$ is continuous, then it is sequentially
continuous.

\begin{definition} Let $f\colon \mathscr{C}\to\mathscr{C}$ be a
mapping. A sequence $\{x\sb n\}$ in $\mathscr{C}$ is called a
$\tau$-approximate fixed point sequence for $f$ if $x\sb n-f(x\sb
n)\stackrel{\tau}{\to} 0$, as $n\to\infty$.
\end{definition}

Hereafter, we will say that $\mathscr{C}$ has the
$\tau$-approximate fixed point property if, whenever we take another Hausdorff vector topology $\sigma$ in $\mathcal{E}$,
then every sequentially continuous mapping
$f\colon(\mathscr{C},\sigma)\to(\mathscr{C},\tau)$ has a
$\tau$-approximate fixed point sequence. For the sake of simplicity, we shall use the term "$\tau$-afp
property" to refer to sets with this property. Analogously, $(\mathcal{E},\tau)$ is said to have the $\tau$-afp
property if every compact convex subset $\mathscr{C}$ of $\mathcal{E}$
has the $\tau$-afp property. The following definition will be of central importance in our study of approximate fixed points.

\begin{definition} An admissible function for $\mathscr{C}$ on $\mathcal{E}$ is an extended real-valued function $\rho\colon \mathcal{E}\to [0,\infty]$ such that
 \begin{compactenum}[(i)]

\item The mapping $(x,y)\mapsto \rho(x-y)$ is continuous on $\mathscr{C}\times \mathscr{C}$,

\item $\rho(x+y)\leq \rho(x)+\rho(y)$, for all $x,y\in \mathcal{E}$,

\item $\rho(\lambda x)=|\lambda| \rho(x)$, for all $\lambda \in \mathbb{R}$ and $x\in \mathcal{E}$,

\item If $x,y\in\mathscr{C}$ and $\rho(x-y)=0$, then $x=y$.

\end{compactenum}
\end{definition}

\begin{remark} Notice that if $\rho$ is an admissible function for $\mathscr{C}$ on $\mathcal{E}$, then it defines a metric on $\mathscr{C}$  whose induced topology is coarser than $\tau$.
\end{remark}

It is instructive to compare the notion of continuity in the sense of (i) with the usual one. It is easy to see that if $\rho$ is continuous on $\mathcal{E}$, then $(x,y)\mapsto \rho(x-y)$ is continuous on $\mathscr{C}\times\mathscr{C}$. Furthermore, if (i)-(iii) hold then $\rho$ is continuous on $\mathscr{C}$. It is not true, in general, that if $\rho$ is continuous on $\mathscr{C}$, then it satisfies (i). For example, if $\mathcal{E}=\mathbb{R}$ and $\mathscr{C}=[0,\infty)$, then the mapping $\rho\colon \mathbb{R}\to [0,\infty]$ defined by
\begin{equation*}
\rho(x)=\left\{
\begin{split}
1/x,\   &\, \text{ if }\ x>0,\\
\infty,\ &\, \text{ if }\ x=0,\\
0,\      &\, \text{ if }\ x<0,
\end{split}\right.
\end{equation*}
is continuous on $\mathscr{C}$. However, the mapping $F\colon \mathscr{C}\times\mathscr{C}\to [0,\infty]$ given by $F(x,y)=\rho(x-y)$ is not continuous at the point $(1,1)$. Indeed, it suffices to see that $(1-1/k,1)$ converges to $(1,1)$ in $\mathscr{C}\times\mathscr{C}$, while that $F(1-1/k,1)=0$ and $F(1,1)=\infty$.

It is important to highlight also that the advantage of considering admissible functions as above, is the possibility of working with extended real seminorms in topological vector spaces. As we shall see, this will be fundamental in our approach. However, this deserves a comment. Although both assumptions (ii) and (iii) enable the correct use of the Schauder's method for finding fixed points, admissible functions could perfectly not be useful for getting finite dimensional approximations by finite rank continuous operators, if they were not compatible with assumptions of continuousness. In other words, although it may seem quite reasonable, it is not immediately evident that this class of admissible functions is sufficiently good to imply that the Schauder-projection operator is continuous. This certainly would not allow us to use the Brouwer's fixed point result in a final step in getting fixed points. The following proposition settles this question quite nicely.

\begin{proposition}\label{prop:SchProj} Let $\rho$ be an admissible function for $\mathscr{C}$ on $\mathcal{E}$. Then for any $\epsilon>0$ and $p\in \mathscr{C}$, the function $g\colon \mathscr{C}\to [0,\infty)$ given by
$$
g(x)=\max\{\epsilon-\rho(x-p),0\},
$$
is continuous on $\mathscr{C}$.
\end{proposition}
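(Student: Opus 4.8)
The plan is to realize $g$ as a composition of two continuous maps: an inner map coming directly from hypothesis (i), and an outer map that is an elementary truncation on the extended half-line $[0,\infty]$. First I would fix $p\in\mathscr{C}$ and consider $\iota_p\colon \mathscr{C}\to\mathscr{C}\times\mathscr{C}$ defined by $\iota_p(x)=(x,p)$. Since its first coordinate is the identity and its second is constant, $\iota_p$ is continuous into the product. Composing with the map $(x,y)\mapsto\rho(x-y)$, which is continuous on $\mathscr{C}\times\mathscr{C}$ by property (i), shows that $\phi\colon\mathscr{C}\to[0,\infty]$, $\phi(x)=\rho(x-p)$, is continuous, where $[0,\infty]$ carries its usual order topology.

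Next I would introduce the outer map $h\colon[0,\infty]\to[0,\infty)$ given by $h(t)=\max\{\epsilon-t,0\}$ and check that it is continuous. On the bounded part this is clear, since $h(t)=\epsilon-t$ for $t\in[0,\epsilon]$ and $h(t)=0$ for $t\in[\epsilon,\infty)$, and the two expressions agree at $t=\epsilon$. The only point requiring genuine care is $t=\infty$: here $h(\infty)=0$, and because $h$ vanishes identically on $[\epsilon,\infty)$ we have $h(t)\to 0$ as $t\to\infty$, so $h$ is continuous at $\infty$ as well. This is precisely the step that exploits the truncation by $0$, which absorbs the value $\rho(x-p)=\infty$ and keeps $g$ finite and real-valued.

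Finally, since $g=h\circ\phi$ with both $\phi$ and $h$ continuous, $g$ is continuous on $\mathscr{C}$, which is the desired conclusion. I expect the crux of the argument, and the only place where the extended-real nature of $\rho$ could cause trouble, to be the continuity of $h$ at infinity; once that is settled, the composition step is entirely routine.
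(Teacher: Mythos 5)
Your proof is correct, but it follows a genuinely different route from the paper's. The paper argues pointwise: it fixes $x_0\in\mathscr{C}$ and $\delta>0$, invokes (i) to produce a neighborhood $U\times V$ of $(x_0,p)$ on which $\rho(x-z)$ stays within $\delta$ of $\rho(x_0-p)$, and then runs a case analysis on whether $x_0-p$ lies in the effective domain $\mathscr{D}(\rho)=\{x\in\mathcal{E}\colon\rho(x)<\infty\}$, verifying the two one-sided inequalities for $g$ in each case. You instead factor $g=h\circ\phi$, where $\phi(x)=\rho(x-p)$ is continuous into $[0,\infty]$ (property (i) composed with the slice map $x\mapsto(x,p)$) and $h(t)=\max\{\epsilon-t,0\}$ is continuous on the extended half-line because it vanishes identically on $[\epsilon,\infty]$. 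This buys you a shorter argument with no case analysis, and it isolates the only delicate point --- the value $\infty$ --- inside the continuity of a single fixed elementary function $h$ at $t=\infty$, rather than letting it propagate through the estimates. The one commitment your argument makes is that (i) is read as continuity into $[0,\infty]$ equipped with its order topology; this is the natural reading, it is the one the paper itself relies on elsewhere (for instance, to see that the balls $B_{1/n}(x)=\{y\in\mathscr{C}\colon\rho(y-x)<1/n\}$ are open in the proof of Theorem \ref{trm:1}), and the paper's own proof of this proposition implicitly uses an even stronger convention (that $\rho(x-p)$ remains infinite on a neighborhood of $x_0$ when $\rho(x_0-p)=\infty$), which implies order-topology continuity; so your composition argument is valid under either convention. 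Like the paper's proof, yours uses only property (i) of admissibility, so neither argument needs (ii)--(iv).
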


\begin{proof} Firstly, let us recall that the effective domain of $\rho$ is the set
$$
\mathscr{D}(\rho)=\{ x\in \mathcal{E}\colon \rho(x)<\infty\}.
$$
Let $x\sb 0$ be a point in $\mathscr{C}$ and $\delta>0$ be arbitrary. By assumption, there exists a neighborhood $U\times V$ of $(x\sb 0,p)$ in $\mathscr{C}\times\mathscr{C}$ such that
$$
\rho(x\sb 0-p)-\delta\leq \rho(x-z)\leq \rho(x\sb 0-p)+\delta,
$$
for all $(x,z)\in U\times V$. If $x\sb 0-p\not\in \mathscr{D}(\rho)$ then $\rho(x\sb 0-p)=\infty$ and, hence, $\rho(x-p)=\infty$ for all $x$ in $U$. In consequence, $g(x)=g(x\sb 0)=0$ for all $x\in U$. In case $x\sb 0-p\in \mathscr{D}(\rho)$, we can conclude that $x-p\in \mathscr{D}(\rho)$ for all $x$ in $U$. In this case, it is easy to see that $g(x\sb 0)+\delta\geq g(x)$, for all $x\in U$. On the other hand, if $g(x\sb 0)=0$, then clearly $g(x)\geq g(x\sb 0)-\delta$ holds for every $x\in U$.  Assuming now that $g(x\sb 0)=\epsilon-\rho(x\sb 0-p)$, we have $g(x\sb 0)-\delta\leq \epsilon-\rho(x-p)\leq g(x)$, for all $x$ in $U$. In any case, we have proven that $g$ is continuous at $x\sb 0$, and hence continuous in $\mathscr{C}$. The proof is complete.
\end{proof}

We are now ready to establish the main result of this paper.

\begin{theorem}\label{trm:1} Let $\mathscr{C}$ be a compact convex subset of $(\mathcal{E},\tau)$. Assume
that $\mathscr{C}$ has an admissible function on $\mathcal{E}$.
Then $\mathscr{C}$ has the $\tau$-afp property.
\end{theorem}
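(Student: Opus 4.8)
The plan is to run the classical Schauder projection argument, modified by two observations: first, reduce everything to a finite-dimensional simplex on which the two topologies $\sigma$ and $\tau$ are forced to coincide, and second, convert $\rho$-smallness into genuine $\tau$-convergence at the very end by a compactness argument. Fix an arbitrary Hausdorff vector topology $\sigma$ on $\mathcal{E}$ and a sequentially continuous map $f\colon(\mathscr{C},\sigma)\to(\mathscr{C},\tau)$, and let $\rho$ be the given admissible function. By the remark following the definition, $d(x,y):=\rho(x-y)$ is a metric on $\mathscr{C}$ whose induced topology is coarser than $\tau$. Hence the identity $(\mathscr{C},\tau)\to(\mathscr{C},d)$ is continuous, and since $\mathscr{C}$ is $\tau$-compact the space $(\mathscr{C},d)$ is compact, thus totally bounded. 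Consequently, for every $n\in\mathbb{N}$ there is a finite set $\{p_1,\dots,p_{k_n}\}\subseteq\mathscr{C}$ that is a $(1/n)$-net for $d$.

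Next I would build the Schauder projection at scale $1/n$. For each $i$ set $g_i(x)=\max\{1/n-\rho(x-p_i),0\}$; by Proposition \ref{prop:SchProj} each $g_i$ is continuous on $\mathscr{C}$, and it is $\tau$-continuous because assumption (i) makes $x\mapsto\rho(x-p_i)$ $\tau$-continuous on $\mathscr{C}$. The net property guarantees $\sum_i g_i(x)>0$ for every $x\in\mathscr{C}$, so, writing $\lambda_i(x)=g_i(x)/\sum_j g_j(x)$, the map
\[
P_n(x)=\sum_{i}\lambda_i(x)\,p_i
\]
is a well-defined $\tau$-continuous map of $\mathscr{C}$ into the compact convex set $S_n:=\mathrm{conv}\{p_1,\dots,p_{k_n}\}\subseteq\mathscr{C}$. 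Observe for later use that $\lambda_i(x)>0$ forces $\rho(x-p_i)<1/n$, and $\sum_i\lambda_i(x)=1$.

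The crucial step, and the one I expect to be the main obstacle, is to make $\Phi_n:=P_n\circ f$ a \emph{continuous} self-map of $S_n$, since only then can Brouwer's theorem be invoked; the difficulty is precisely that $f$ is merely sequentially continuous and only from $\sigma$ to $\tau$. Here the finite-dimensional reduction resolves everything. Let $F_n=\mathrm{span}\{p_1,\dots,p_{k_n}\}$, a finite-dimensional subspace. Both $\sigma|_{F_n}$ and $\tau|_{F_n}$ are Hausdorff vector topologies on a space of the same finite dimension, so they coincide with the Euclidean topology; in particular $\sigma$ and $\tau$ agree on $S_n$. Thus $f|_{S_n}\colon(S_n,\sigma|_{S_n})\to(\mathscr{C},\tau)$ is sequentially continuous on a metrizable domain, hence continuous, i.e. continuous from $(S_n,\text{Eucl})$ to $(\mathscr{C},\tau)$. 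Since $P_n\colon(\mathscr{C},\tau)\to(S_n,\text{Eucl})$ is continuous, the composite $\Phi_n\colon S_n\to S_n$ is a continuous self-map of a finite-dimensional compact convex set, and Brouwer's theorem yields $x_n\in S_n$ with $P_n(f(x_n))=x_n$.

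It remains to estimate and pass to the limit. Put $y_n=f(x_n)$, so that $x_n=\sum_i\lambda_i(y_n)p_i$; then by (ii)--(iii),
\[
\rho(x_n-y_n)=\rho\Big(\sum_i\lambda_i(y_n)(p_i-y_n)\Big)\le\sum_i\lambda_i(y_n)\,\rho(p_i-y_n)\le\frac1n,
\]
using that $\lambda_i(y_n)>0$ only when $\rho(y_n-p_i)<1/n$. Hence $\rho(x_n-f(x_n))\to0$. Finally I would upgrade this to $\tau$-convergence. If $x_n-f(x_n)\not\to0$ in $\tau$, there is an open $\tau$-neighborhood $W$ of $0$ and a subsequence with $x_{n_j}-f(x_{n_j})\notin W$; since $\mathscr{C}\times\mathscr{C}$ is $\tau$-compact, some subnet of $(x_{n_j},f(x_{n_j}))$ converges to a point $(x,w)\in\mathscr{C}\times\mathscr{C}$, and continuity (i) gives $\rho(x-w)=\lim\rho(x_{n_j}-f(x_{n_j}))=0$, whence $x=w$ by (iv). Then the corresponding subnet of $x_{n_j}-f(x_{n_j})$ converges to $x-w=0$ in $\tau$, contradicting its staying outside $W$. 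Therefore $x_n-f(x_n)\xrightarrow{\tau}0$, and $\{x_n\}$ is the desired $\tau$-approximate fixed point sequence.
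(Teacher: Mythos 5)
Your proof is correct and follows essentially the same route as the paper's: a Schauder projection built from $\rho$-balls (continuous by Proposition \ref{prop:SchProj}), the coincidence of all Hausdorff vector topologies on the finite-dimensional span of the net points (which is exactly how the paper makes $P_n\circ f$ genuinely continuous so that Brouwer's theorem applies), and the estimate $\rho(x_n-f(x_n))\le 1/n$. The only cosmetic difference is the final upgrade to $\tau$-convergence: the paper observes that $\tau$ must coincide with the $\rho$-metric topology on the compact set $\mathscr{C}$ and passes to a $\tau$-convergent subsequence, whereas you argue by contradiction with subnets and thereby obtain $\tau$-convergence of the full sequence; both arguments are valid.
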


\begin{proof} Fix any $n\geq 1$. From (i) and the fact that $\rho(0)=0$, it follows that if $x\in\mathscr{C}$ then the set $B\sb{1/n}(x)=\{y\in\mathscr{C}\colon \rho(y-x)<1/n\}$ is $\tau$-open in $\mathscr{C}$ with respect to the relative topology of $\mathcal{E}$. Thus, the family $\{B\sb{1/n}(x)\colon x\in\mathscr{C}\}$ is an open covering of the compact set $\mathscr{C}$. From compactness we can extract a finite subcovering, i.e. a finite subset $\Gamma\sb n\colon=\{x\sb 1,\dots,x\sb{N\sb n}\}$ of $\mathscr{C}$ such that
$$
\mathscr{C}=\bigcup\sb{i=1}\sp{N\sb n} B\sb{1/n}(x\sb i).
$$
Let $P\sb n\colon \mathscr{C}\to {\rm{co}}(\Gamma\sb n)\subset \overline{\rm{co}}(\Gamma\sb n)$ be the Schauder's projection associated to $\Gamma\sb n$ and $\rho$, where $\overline{\rm{co}}(\Gamma\sb n)$ denotes the $\tau$-closure of the convex hull of $\Gamma\sb n$. In view of the foregoing proposition, it follows that $P\sb n$ is $\tau$-continuous. Moreover, by using (ii) and (iii) we see that
$$
\rho(P\sb n(x)-x)<1/n,
$$
for all $x\in \mathscr{C}$. Let now $\sigma$ be another Hausdorff vector topology in $\mathcal{E}$ and
$f\colon (\mathscr{C},\sigma)\to(\mathscr{C},\tau)$ a
sequentially continuous mapping. Then the mapping
$$
P\sb n\circ
f\colon (\overline{{\rm co}}(\Gamma\sb n),\sigma)\to(\overline{\rm co}(\Gamma\sb
n),\tau)$$
is also sequentially continuous. Let us denote by $\mathcal{G}\sb n$ the linear span of $\Gamma\sb n$. Observe that the linear operator $\Phi\colon \mathcal{G}\sb n\to \mathbb{E}$ defined by $\Phi(\sum\alpha\sb i x\sb i)=\sum\alpha\sb i{\rm{e}}\sb i$ is an algebraic isomorphism, where $\{{\rm{e}}\sb i\}$ denotes the canonical basis of the space $\mathbb{E}=(\mathbb{R}\sp{N\sb n},{\rm{eucld}})$. Here the word "{\rm eucld}" indicates the euclidean topology. Thus, if we denote by $\Phi\sb\sigma$ (resp. $\Phi\sb\tau$) the mapping $\Phi$ from $(\mathcal{G}\sb n,\sigma)$ (resp. $(\mathcal{G}\sb n,\tau)$) into $\mathbb{E}$ then, from Corollary 4 in
\cite[pg.14]{Day}, it follows that both these maps are linear homeomorphisms. Hence, setting $K\sb n=\Phi\sb\tau(\overline{\rm{co}}(\Gamma\sb n))$ we see that $K\sb n=\Phi\sb\sigma(\overline{\rm{co}}(\Gamma\sb n))$.
$$
\begin{CD}
(\overline{\rm{co}}(\Gamma\sb n),\sigma) @>P\sb n\circ f>> (\overline{\rm{co}}(\Gamma\sb n),\tau)\\
\Phi\sb\sigma\sp{-1}@AAA     @VVV\Phi\sb\tau\\
(K\sb n,{\rm{eucld}})    @>>>   (K\sb n,{\rm{eucld}})
\end{CD}
$$
According above diagram, $\Phi\sb\tau\circ(P\sb n\circ f)\circ\Phi\sb\sigma\sp{-1}$ is a sequentially continuous mapping from $(K\sb n,{\rm{eucld}})$ into itself. Since $K\sb n$ is convex and compact with respect to the ${\rm{eucld}}$-topology, it follows from Brouwer's fixed point theorem that
$$
[\Phi\sb\tau\circ(P\sb n\circ f)\circ\Phi\sb\sigma\sp{-1}](z\sb n)=z\sb n,
$$
for some $z\sb n\in K\sb n$. Thus $(P\sb n\circ f)(u\sb n)=u\sb n$, where $u\sb n=\Phi\sp{-1}(z\sb n)$. It follows then that
$$
\rho(u\sb n-f(u\sb n))<1/n,
$$
for all $n\geq 1$. Using now the following fact from general topology:
\begin{center}
{\it If $\mathscr{T}\sb 1$, $\mathscr{T}\sb 2$ are Hausdorff topologies on a set $X$ such that $\mathscr{T}\sb 2$ is finer than $\mathscr{T}\sb 1$ and such that $(X,\mathscr{T}\sb 2)$ is compact, then $\mathscr{T}\sb 1=\mathscr{T}\sb 2$},
\end{center}
we can conclude that $\mathscr{C}$ is sequentially compact, for $\tau$ is finer than $\tau\sb\rho$ on $\mathscr{C}$, the metric topology induced by $\rho$. Thus, we may assume (by passing to a subsequence if necessary) that $u\sb n\stackrel{\tau}{\to} x$ and $f(u\sb
n)\stackrel{\tau}{\to} y$, for some $x,y\in \mathscr{C}$. Hence, in view of (i), we get
$$
\rho(x-y)=0,
$$
and so $x=y$ by (iv). This shows that $u\sb n-f(u\sb n)\stackrel{\tau}{\to}0$ and
concludes the proof.
\end{proof}

\section{Theoretical Implications}
In this section, we explore some of the theoretical implications of Theorem \ref{trm:1}. Although at first glance the conditions required in this result seem somewhat restrictive, they turn up in several practice
situations as we shall soon see. Before seeing this, let us first
recall a basic concept. A family of seminorms $\mathfrak{F}$ in a
vector space $\mathcal{E}$ separates points when
$$
\rho(x)=0 \text{ for all }
\rho\in\mathfrak{F} \text{ imply that } x=0.
$$

Our first corollary is as follows.

\begin{corollary}\label{cor:2} Let $\mathscr{C}$ be a compact convex
subset of a Hausdorff topological vector space $(\mathcal{E},\tau)$ and
$\mathfrak{F}$ a countable family of seminorms
on $\mathcal{E}$ which separate points of
$\mathscr{C}-\mathscr{C}$ and such that the topology $\mathcal{T}$
generated by $\mathfrak{F}$ is coarser than $\tau$ in
$\mathscr{C}$. Then $\mathscr{C}$ has the $\tau$-afp property.
\end{corollary}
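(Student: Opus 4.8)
The plan is to deduce the corollary directly from Theorem \ref{trm:1} by manufacturing a single admissible function for $\mathscr{C}$ on $\mathcal{E}$ out of the countable family $\mathfrak{F}=\{p_n\}_{n\geq 1}$. The naive device $\sum_n 2^{-n}p_n/(1+p_n)$, which would metrize $\mathcal{T}$, is useless here because the truncation $t\mapsto t/(1+t)$ destroys the absolute homogeneity demanded by (iii). Since (iii) forces $\rho$ to be an (extended-real) seminorm, I would instead take a weighted sum
$$
\rho(x)=\sum_{n=1}^\infty c_n\, p_n(x),\qquad x\in\mathcal{E},
$$
with coefficients $c_n>0$ still to be chosen and with the value $+\infty$ permitted. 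Properties (ii) and (iii) are then inherited termwise from the triangle inequality and homogeneity of each seminorm $p_n$, while (iv) is immediate: if $x,y\in\mathscr{C}$ and $\rho(x-y)=0$, then $c_n>0$ forces $p_n(x-y)=0$ for every $n$, whence $x=y$ because $\mathfrak{F}$ separates the points of $\mathscr{C}-\mathscr{C}$.

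The hard part is property (i), the joint continuity of $(x,y)\mapsto\rho(x-y)$ on $\mathscr{C}\times\mathscr{C}$, and this is precisely where the hypotheses on $\mathcal{T}$ and the choice of $c_n$ enter. First I would check that for each $n$ the map $(x,y)\mapsto p_n(x-y)$ is $\tau$-continuous on $\mathscr{C}\times\mathscr{C}$: it is continuous for the product of the $\mathcal{T}$-relative topologies, since $p_n$ is $\mathcal{T}$-continuous by construction of the seminorm topology and subtraction is continuous in the vector topology $\mathcal{T}$; and because $\mathcal{T}$ is coarser than $\tau$ on $\mathscr{C}$, the product $\mathcal{T}$-topology is coarser than the product $\tau$-topology on $\mathscr{C}\times\mathscr{C}$, so the map is a fortiori continuous there for $\tau$. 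As $\mathscr{C}\times\mathscr{C}$ is $\tau$-compact, each such map is bounded, giving
$$
M_n:=\sup_{x,y\in\mathscr{C}} p_n(x-y)<\infty .
$$

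I would then fix, say, $c_n=2^{-n}/(1+M_n)$, so that $c_n\,p_n(x-y)\leq 2^{-n}$ uniformly on $\mathscr{C}\times\mathscr{C}$. The Weierstrass $M$-test now furnishes uniform convergence of the defining series on $\mathscr{C}\times\mathscr{C}$; each partial sum is $\tau$-continuous there by the previous step, and a uniform limit of continuous functions is continuous, so $(x,y)\mapsto\rho(x-y)$ is continuous on $\mathscr{C}\times\mathscr{C}$. This establishes (i) and confirms that $\rho$ is admissible for $\mathscr{C}$ on $\mathcal{E}$, after which Theorem \ref{trm:1} applies verbatim to yield the $\tau$-afp property. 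The only point I would verify carefully while writing is the extended-real bookkeeping in (ii)–(iii) (the conventions $\lambda\cdot\infty=\infty$ for $\lambda\neq0$ and $0\cdot\infty=0$), but since continuity is asserted only on $\mathscr{C}\times\mathscr{C}$, where the bound $\sum_n c_n M_n<\infty$ keeps $\rho$ finite, no genuine difficulty with infinite values intrudes on the decisive step.
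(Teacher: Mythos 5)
Your proposal is correct and follows essentially the same route as the paper: rescale the seminorms so that the series $\sum_n c_n p_n$ converges uniformly on $\mathscr{C}\times\mathscr{C}$ (the paper normalizes so that $\max_{\mathscr{C}}\rho_n\leq 2^{-n-1}$, which is the same device as your $c_n=2^{-n}/(1+M_n)$), verify (ii)--(iv) termwise, obtain (i) from the $\tau$-continuity of each term plus uniform convergence, and invoke Theorem \ref{trm:1}. The only cosmetic difference is that the paper checks the continuity of each $(x,y)\mapsto\rho_n(x-y)$ by a net argument with the triangle inequality, whereas you compose subtraction with the $\mathcal{T}$-continuity of $p_n$; these are the same underlying fact.
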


\begin{proof} We may set $\mathfrak{F}=\{\rho\sb n\colon n\in\mathbb{N}\}$.
Since $\mathscr{C}$ is compact and $\mathcal{T}$ is coarser than $\tau$, each $\rho\sb n$ restricted to
$\mathscr{C}$ is $\tau$-continuous. Thus we have $\max\{\rho\sb n(x)\colon
x\in\mathscr{C}\}<\infty$ for all $n\in\mathbb{N}$. By replacing the
seminorms $\rho\sb n$ by suitable positive multiples, if
necessary, we may assume that
\begin{equation}\label{eqn:cor1}
\max\{\rho\sb n(x)\colon x\in\mathscr{C}\}\leq 2\sp{-n-1},
\end{equation}
for all $n\in\mathbb{N}$. We then define our admissible function $\rho\colon\mathcal{E}\to [0,\infty]$ as
$$
\rho(x)=\sum\sb{n=1}\sp\infty\rho\sb n(x),\quad x\in\mathcal{E}.
$$
Let us check conditions of Theorem \ref{trm:1}. Notice that $\rho(x-y)<\infty$ for all $x,y\in\mathscr{C}$. Moreover, one readily checks (ii)-(iv). Using now (\ref{eqn:cor1}), we see that the sequence of functions $\rho\sp n(x-y)=\sum\sb{i=1}\sp n \rho\sb i(x-y)$ is Cauchy w.r.t. the topology of uniform convergence on $\mathscr{C}\times\mathscr{C}$. Thus $\rho\sp n(x-y)$ converges uniformly on $\mathscr{C}\times \mathscr{C}$ to $\rho(x-y)$. Furthermore, to verify that (i) holds, we have only to ensure this for each $\rho\sb n$. Let $(x\sb\nu,y\sb\nu)$ be a net in $\mathscr{C}\times\mathscr{C}$ converging to $(x,y)$. Since $\tau$ is finer than $\mathcal{T}$ on $\mathscr{C}$, both $\rho\sb n(x\sb\nu-x)$ and $\rho\sb n(y\sb\nu-y)$ converge to $0$. We may then apply triangular inequality to conclude that $|\rho\sb n(x\sb\nu-y\sb\nu)-\rho\sb n(x-y)|\to 0$. As desired. Theorem
\ref{trm:1} now implies that $\mathscr{C}$ has the $\tau$-afp property.
\end{proof}

Assume now that $(\mathcal{E},\tau)$ is a locally convex Hausdorff
topological vector space whose topological dual $\mathcal{E}\sp *$ is weak-star
separable. It is well-known that $\mathcal{E}\sp *$ is total over
$\mathcal{E}$. Then, for a weak-star dense sequence $\{x\sp *\sb n\}$ in
$\mathcal{E}\sp *$, it follows that
$$
x\mapsto |x\sp *\sb n(x)|
$$
yields a countable family $\mathfrak{F}$ of $\tau$-continuous (resp. weak-continuous) seminorms on $\mathcal{E}$ which separates points. In this case, notice that the topology $\mathcal{T}$ determined by $\mathfrak{F}$ is coarser than $\tau$ (resp. weak topology). Therefore, in view of Corollary \ref{cor:2}, we conclude that every
compact (resp. weakly compact) convex subset of $\mathcal{E}$ has the $\tau$-approximate (resp. weak) fixed point
property.\vspace{.2cm}

This discussion results in the following statement.

\begin{corollary}\label{cor:3} Every (weakly) compact convex subset $\mathscr{C}$
of a Hausdorff locally convex space $(\mathcal{E},\tau)$ whose
topological dual space $\mathcal{E}\sp *$ is weak-star separable has
the (weak) $\tau$-afp property.
\end{corollary}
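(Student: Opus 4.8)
The plan is to deduce both assertions directly from Corollary \ref{cor:2} by manufacturing, from the weak-star separability hypothesis, a countable separating family of seminorms that is subordinate to the relevant ambient topology. I would treat the two cases in parallel: for the unqualified statement the ambient topology is $\tau$ itself and $\mathscr{C}$ is $\tau$-compact, while for the parenthetical statement the ambient topology is the weak topology $w=\sigma(\mathcal{E},\mathcal{E}^*)$ and $\mathscr{C}$ is $w$-compact. In each case the goal is simply to exhibit a countable family $\mathfrak{F}$ meeting the hypotheses of Corollary \ref{cor:2}.

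First I would fix a weak-star dense sequence $\{x^*_n\}$ in $\mathcal{E}^*$, whose existence is exactly the weak-star separability assumption, and set $\rho_n(x)=|x^*_n(x)|$. Each $\rho_n$ is a seminorm; since $x^*_n\in\mathcal{E}^*=(\mathcal{E},\tau)^*$, each $\rho_n$ is $\tau$-continuous, and by the very definition of the weak topology each is also $w$-continuous. Consequently the topology $\mathcal{T}$ generated by $\mathfrak{F}=\{\rho_n\}$, being the initial topology induced by these maps, is coarser than $\tau$ (and a fortiori coarser than $w$ in the weak case), so the subordination requirement of Corollary \ref{cor:2} is satisfied automatically.

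The one point that genuinely needs checking — and the step I expect to be the crux — is that $\mathfrak{F}$ separates the points of $\mathscr{C}-\mathscr{C}$; in fact I would prove the stronger statement that it separates all of $\mathcal{E}$. Since $(\mathcal{E},\tau)$ is locally convex and Hausdorff, the Hahn--Banach theorem guarantees that $\mathcal{E}^*$ is total over $\mathcal{E}$. Now suppose $\rho_n(x)=0$ for every $n$, that is, $x^*_n(x)=0$ for all $n$. Given an arbitrary $x^*\in\mathcal{E}^*$, weak-star density furnishes a net drawn from $\{x^*_n\}$ converging to $x^*$ in the weak-star topology; since weak-star convergence is precisely pointwise convergence on $\mathcal{E}$, evaluating at $x$ yields $x^*(x)=0$. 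As $x^*$ was arbitrary and $\mathcal{E}^*$ is total, we conclude $x=0$. Thus $\mathfrak{F}$ separates the points of $\mathcal{E}$, hence in particular those of $\mathscr{C}-\mathscr{C}$.

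With these verifications in hand the conclusion is immediate. In the compact case, Corollary \ref{cor:2} applied to the $\tau$-compact convex set $\mathscr{C}$ with the $\tau$-continuous family $\mathfrak{F}$ yields the $\tau$-afp property. In the weakly compact case, totality of $\mathcal{E}^*$ makes $(\mathcal{E},w)$ a Hausdorff topological vector space in which $\mathscr{C}$ is a compact convex set, so Corollary \ref{cor:2}, now applied with $w$ playing the role of the ambient topology and $\mathfrak{F}$ regarded as a family of $w$-continuous seminorms, delivers the weak $\tau$-afp property. I would expect no further difficulty here, since all the substantive work is the separation argument above and the routine observation that the weak topology renders a weakly compact set compact in the required sense.
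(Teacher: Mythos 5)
Your proposal is correct and follows essentially the same route as the paper: the paper's proof is exactly the discussion preceding the corollary, which takes a weak-star dense sequence $\{x^*_n\}$ in $\mathcal{E}^*$, forms the countable family of $\tau$-continuous (resp.\ weak-continuous) seminorms $x\mapsto|x^*_n(x)|$, notes that it separates points and generates a topology coarser than $\tau$ (resp.\ the weak topology), and then invokes Corollary \ref{cor:2}. Your only addition is to spell out the separation argument (density plus totality of $\mathcal{E}^*$ via Hahn--Banach), which the paper leaves implicit.
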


The preceding discussion still deserves a comment. Notice that on a compact set $\mathscr{C}$ in a Hausdorff locally convex space $\mathcal{E}$ the weak topology coincides with the initial one. More generally, it coincides with every topology on $\mathscr{C}$ generated by any countable family of continuous linear functionals separating the points in $\mathscr{C}$, (cf. \cite[pp. 364--365]{Bogachev}). It should be noted, however, that in general $\mathcal{T}$ need not generate the weak topology on the whole space $\mathcal{E}$. Indeed, if this were the case, then from Corollary \ref{cor:2} we would be able to conclude that $\mathcal{T}$ coincides with the weak topology on every compact subset of $\mathcal{E}$, which in turn certainly would imply that $\tau$ is identical to the weak topology of $\mathcal{E}$. Therefore, the aforementioned situation holds if and only if the original topology of $\mathcal{E}$ is identical with its weak topology. In mathematical terms, we have:

\begin{proposition} Let $(\mathcal{E},\tau)$ be a locally convex Hausdorff topological vector space and $\mathfrak{F}=\{x\sb n\sp *\}$ a weak-star dense sequence in $\mathcal{E}\sp *$. Suppose that $\mathcal{T}$ is the locally convex topology determined by $\mathfrak{F}$. Then $\mathcal{T}$ coincides with the weak topology on every compact subset of $\mathcal{E}$ if, and only if, $\tau$ is identical to the weak topology of $\mathcal{E}$.
\end{proposition}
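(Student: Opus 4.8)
The plan is to exploit the standing inclusions $\mathcal{T}\subseteq\sigma(\mathcal{E},\mathcal{E}^*)\subseteq\tau$, which hold because each $x_n^*$ lies in $\mathcal{E}^*$ while the weak topology $\sigma(\mathcal{E},\mathcal{E}^*)$ is the coarsest one making every member of $\mathcal{E}^*$ continuous. First I would record that $\mathfrak{F}=\{x_n^*\}$ separates the points of $\mathcal{E}$: given $x\neq 0$, totality of $\mathcal{E}^*$ furnishes $\varphi\in\mathcal{E}^*$ with $\varphi(x)\neq 0$, and since $\{z^*\colon z^*(x)\neq 0\}$ is weak-star open and nonempty, weak-star density of $\mathfrak{F}$ forces some $x_n^*(x)\neq 0$. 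Consequently $\mathcal{T}$ is a Hausdorff (metrizable) topology, and the evaluation map $J\colon x\mapsto (x_n^*(x))_n$ embeds $(\mathcal{E},\mathcal{T})$ into $\mathbb{R}^{\mathbb{N}}$. Because $\sigma(\mathcal{E},\mathcal{E}^*)\subseteq\tau$ is automatic, the entire content of the equivalence reduces to the reverse inclusion $\tau\subseteq\sigma(\mathcal{E},\mathcal{E}^*)$.

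For the easy implication, suppose $\tau=\sigma(\mathcal{E},\mathcal{E}^*)$ and let $K$ be any compact subset. Each $x_n^*$ is continuous on $K$ and, by the previous paragraph, the family separates the points of $K$; hence $J|_K\colon K\to\mathbb{R}^{\mathbb{N}}$ is a continuous injection of a compact space into a Hausdorff space, and is therefore a homeomorphism onto its image. Reading off the pullback topology, this says exactly that $\mathcal{T}$ and $\sigma(\mathcal{E},\mathcal{E}^*)$ induce one and the same topology on $K$. This is the coincidence phenomenon quoted from \cite{Bogachev}, and it settles this direction cleanly.

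For the converse I would argue by contraposition: assuming the strict inclusion $\sigma(\mathcal{E},\mathcal{E}^*)\subsetneq\tau$, the goal is to exhibit a compact set $K$ on which $\mathcal{T}$ fails to recover the weak topology. Since $\tau$ and $\sigma(\mathcal{E},\mathcal{E}^*)$ share the \emph{same} dual $\mathcal{E}^*$, this strictness is a purely topological feature, and the plan would be to extract from it a net (or sequence) converging in the coarse topology $\mathcal{T}$ but not weakly, and then to adjoin its cluster points so as to obtain a compact $K$ witnessing $\mathcal{T}\neq\sigma(\mathcal{E},\mathcal{E}^*)$ on $K$. The role of \emph{weak-star separability} of $\mathcal{E}^*$---as opposed to $\mathcal{E}^*$ being the finite-linear span of $\mathfrak{F}$, which is precisely what would force $\mathcal{T}=\sigma(\mathcal{E},\mathcal{E}^*)$ globally---is exactly to leave room for such a gap between $\mathcal{T}$ and the weak topology; in the author's phrasing this is where Corollary \ref{cor:2} is meant to be leveraged to transport a local statement to the whole space.

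I expect this converse to be the main obstacle, and a genuinely delicate one. The difficulty is structural: on any set compact in the original topology $\tau$ the weak and the $\tau$ topologies already agree (again by the compact-to-Hausdorff argument above), so such a set can never expose a gap, and the witnessing $K$ must be compact only in a strictly coarser sense. Reconciling ``$K$ coarse enough to be compact'' with ``$K$ fine enough that $\mathcal{T}\neq\sigma(\mathcal{E},\mathcal{E}^*)$ on it'' is precisely the tension to be resolved, and it is here that local convexity together with weak-star separability must be brought to bear. Absent an explicit construction of such a $K$ the implication cannot be forced, so pinning it down is the crux on which the whole statement rests.
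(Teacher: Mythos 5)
Your ``easy implication'' is correct, and your structural worry about the converse is not a personal failure to find a trick --- it is the proof that no such trick exists. Notice that your argument for the forward direction never actually uses the hypothesis $\tau=\sigma(\mathcal{E},\mathcal{E}^*)$: for \emph{any} $\tau$-compact set $K$, the identity map $(K,\tau|_K)\to(K,\mathcal{T}|_K)$ is a continuous bijection from a compact space onto a Hausdorff space ($\mathcal{T}$ is Hausdorff because, as you showed, $\mathfrak{F}$ separates points), hence a homeomorphism, and likewise for $(K,\tau|_K)\to(K,\sigma|_K)$, where $\sigma$ denotes the weak topology. So $\mathcal{T}|_K=\sigma|_K=\tau|_K$ holds on every $\tau$-compact set \emph{unconditionally}; this is exactly the fact from \cite{Bogachev} quoted just before the Proposition, and the same is true if ``compact'' is read as weakly compact. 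The left-hand side of the equivalence is therefore a tautology, and the implication you tried to reach by contraposition is false: let $\mathcal{E}$ be any infinite-dimensional separable Banach space with its norm topology $\tau$; its dual is weak-star separable by the paper's own remark, every norm-compact set enjoys the coincidence property, and yet $\tau$ is strictly finer than $\sigma$. Your own observation that a $\tau$-compact set ``can never expose a gap'' is thus not a tension to be resolved but a refutation of the ``only if'' direction; the missing construction of a witnessing $K$ is missing because it cannot exist.

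Comparing with the paper: there is no proof there for you to have missed. The Proposition is offered as a restatement (``In mathematical terms, we have:'') of the informal discussion preceding it, and the load-bearing step of that discussion --- that coincidence of $\mathcal{T}$ with the weak topology on every compact subset ``certainly would imply that $\tau$ is identical to the weak topology of $\mathcal{E}$'' --- is asserted without argument; it is precisely the implication you isolated and could not prove, and the invocation of Corollary \ref{cor:2} there is a non sequitur, since that corollary concerns the approximate fixed point property and says nothing about two topologies coinciding. Nor can the statement be rescued by reading ``compact'' as $\mathcal{T}$-compact, for then the \emph{other} direction breaks: take $\mathcal{E}=\ell_2$ with $\tau$ equal to the weak topology (so the right-hand side holds) and $\{x_m^*\}$ norm dense in $\ell_2$; choosing $n_k$ so large that $|x_m^*(e_{n_k})|\le k^{-2}$ for all $m\le k$, one gets $k\,e_{n_k}\to 0$ in $\mathcal{T}$, so $K=\{0\}\cup\{k\,e_{n_k}\colon k\ge 1\}$ is $\mathcal{T}$-compact, while the functional $y=\sum_k k^{-1}e_{n_k}\in\ell_2$ satisfies $\langle y,k\,e_{n_k}\rangle=1$ for all $k$, so $\mathcal{T}|_K\neq\sigma|_K$. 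In short: what you proved is sound, the direction you left open is genuinely unprovable, and the honest conclusion is that the Proposition as stated is false --- the defect lies in the paper, not in your attempt.
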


\begin{remark} As a consequence of the preceding corollary it follows that every separable Banach space has the weak-afp property. Indeed, if $\mathcal{E}$ is a separable Banach space then, by the Banach-Alaoglu-Bourbaki theorem, each dual ball $B\sb{\mathcal{E}\sp *}(0,n)$ centered at the origin with radius $n$, $n\geq 1$, is a weak-star compact metric space and hence a separable metric space. This implies that the dual $\mathcal{E}\sp *$ is weak-star separable.
\end{remark}

\begin{remark} Let $(\mathcal{E},\tau)$ be a locally convex space with a Schauder basis. Then, as a simple computation shows, the topological dual space of $\mathcal{E}$ is weak-star separable.
\end{remark}

In view of Lin-Sternfeld's theorem the following result is quite surprising.

\begin{theorem}\label{trm:SolPb2} Let $\mathscr{C}$ be a weakly compact convex subset
of a Banach space $\mathcal{E}$. Then every demicontinuous mapping
$f\colon \mathscr{C}\to\mathscr{C}$ has a weak-approximate fixed point sequence.
\end{theorem}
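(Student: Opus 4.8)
The plan is to reduce Problem \ref{prob:2} to the separable situation already handled by Corollary \ref{cor:3}. The difficulty is that $\mathcal{E}$ itself need not have weak-star separable dual (think of a nonseparable reflexive space, whose unit ball is weakly compact but nonseparable), so Corollary \ref{cor:3} cannot be invoked on $\mathscr{C}$ directly. However, it does apply inside any separable closed subspace of $\mathcal{E}$, since a separable Banach space has weak-star separable dual. So the strategy is to manufacture a separable, weakly compact, convex subset of $\mathscr{C}$ that is invariant under $f$, apply Corollary \ref{cor:3} there, and then push the resulting weak convergence back up to $\mathcal{E}$.

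First I would build the invariant set by a countable closure procedure. Fix $x_1\in\mathscr{C}$ and, writing $\mathrm{co}_{\mathbb{Q}}$ for rational convex combinations, set $A_0=\{x_1\}$ and $A_{n+1}=\mathrm{co}_{\mathbb{Q}}(A_n)\cup f\big(\mathrm{co}_{\mathbb{Q}}(A_n)\big)$, and let $A=\bigcup_n A_n$. Each $A_n$ is countable, hence $A$ is countable, $A\subseteq\mathscr{C}$ (convexity of $\mathscr{C}$ and $f(\mathscr{C})\subseteq\mathscr{C}$), $A$ is closed under rational convex combinations, and $f(A)\subseteq A$. Let $D$ be the norm-closure of $A$. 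Then $D$ is separable and convex, and being convex and norm-closed it is weakly closed by Mazur's theorem, hence weakly compact as a weakly closed subset of the weakly compact set $\mathscr{C}$.

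The crucial step, where demicontinuity enters, is the verification that $f(D)\subseteq D$. Given $x\in D$, choose $a_k\in A$ with $a_k\to x$ in norm; demicontinuity yields $f(a_k)\rightharpoonup f(x)$, while $f(a_k)\in f(A)\subseteq D$ and $D$ is weakly closed, so $f(x)\in D$. Thus $f$ restricts to a demicontinuous self-map of $D$. I would then transfer everything to the separable closed subspace $\mathcal{E}_0=\overline{\mathrm{span}}(A)$. Because a closed subspace carries its weak topology as the restriction of the weak topology of $\mathcal{E}$ (Hahn--Banach extension of functionals), $D$ stays weakly compact and convex in $\mathcal{E}_0$, and $f\colon(D,\|\cdot\|)\to(D,\mathrm{weak})$ is sequentially continuous. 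Since $\mathcal{E}_0$ is separable, its dual is weak-star separable, so Corollary \ref{cor:3} furnishes a sequence $\{x_n\}\subseteq D$ with $x_n-f(x_n)\rightharpoonup 0$ in $\mathcal{E}_0$. Restricting an arbitrary $\phi\in\mathcal{E}^*$ to $\mathcal{E}_0$ gives $\phi(x_n-f(x_n))\to 0$, whence $x_n-f(x_n)\rightharpoonup 0$ in $\mathcal{E}$, which is the desired weak-approximate fixed point sequence.

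I expect the main obstacle to be the separable reduction in the second and third paragraphs, rather than the final application. The delicate points are arranging a single countable set that is simultaneously closed under convex combinations and under $f$, and then promoting invariance from the countable dense core $A$ to the whole of $D$; the latter is precisely where demicontinuity is indispensable, since it lets norm-approximation of a point of $D$ by elements of $A$ be combined with weak-closedness of $D$ to conclude $f(x)\in D$. Everything else is routine once $D$ is in hand.
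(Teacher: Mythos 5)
Your proof is correct, and it shares the paper's overall skeleton: manufacture a separable, $f$-invariant, weakly compact convex subset of $\mathscr{C}$, pass to the separable closed subspace it spans (whose dual is weak-star separable), apply Corollary \ref{cor:3} there with $\sigma$ the norm topology, and lift the weak convergence back to $\mathcal{E}$ by restricting functionals. Where you genuinely differ is in how the invariant separable set is built, and your route is the more elementary one. The paper starts from the orbit $\mathcal{O}(a)$ of a point $a\in\mathscr{C}$ and iterates norm-closed convex hulls, $\mathscr{A}_0=\overline{\mathrm{co}}\,\mathcal{O}(a)$, $\mathscr{A}_{n+1}=\overline{\mathrm{co}}(f(\mathscr{A}_n))$; separability of each $\mathscr{A}_n$ must then be proved by induction (demicontinuity plus Mazur's theorem: $f(\mathscr{A}_n)$ lies in the closed convex hull of the image of a countable dense subset), and since the $\mathscr{A}_n$ are neither nested nor individually invariant, the paper passes to the increasing sets $\mathscr{B}_k=\bigcap_{n\geq k}\mathscr{A}_n$ --- whose nonemptiness requires weak compactness of $\mathscr{C}$ via the finite intersection property, using $\mathcal{O}(f^{m}(a))\subseteq\mathscr{A}_n$ for $m\geq n$ --- and finally takes $\mathscr{D}=\overline{\bigcup_k\mathscr{B}_k}$, which satisfies $f(\mathscr{B}_k)\subseteq\mathscr{B}_{k+1}$ and is then shown invariant by the same demicontinuity-plus-weak-closedness argument you use for $D$. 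Your countable closure under $f$ and rational convex combinations delivers the invariant core in one stroke: separability is automatic rather than inductive, no compactness is needed to keep intermediate sets nonempty, and Mazur's theorem enters only once (to see that $D$, being norm-closed and convex, is weakly closed). What the paper's heavier construction buys is an invariant set carved out of the asymptotic structure of the orbit, in the spirit of the asymptotic fixed point theorems it pursues in the sequel; as a proof of Theorem \ref{trm:SolPb2} itself, your argument is shorter and sidesteps the two delicate verifications (inductive separability and nonemptiness of the $\mathscr{B}_k$) that the paper's construction forces.
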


\begin{proof} Let us denote by $\|\cdot\|$ the norm in $\mathcal{E}$. Without loss of generality,
we may assume that $f$ is
fixed point free and $\mathcal{E}$ is not separable. Pick any $a\in \mathscr{C}$ and denote by $\mathcal{O}(a)$ the orbit
of $a$ under $f$. Now, we construct inductively a sequence $(\mathscr{A}\sb n)$ of closed convex subsets of $\mathscr{C}$ as follows. We set $\mathscr{A}\sb 0=\overline{{\rm{co}}}\mathcal{O}(a)$ and if $n\geq 1$ we put $\mathscr{A}\sb{n+1}=\overline{\rm{co}}(f(\mathscr{A}\sb n))$, where the overline denotes the closure w.r.t. the norm $\|\cdot\|$. It is easily verified that
$$
\mathcal{O}(f\sp{n+1}(a))\subseteq f(\mathscr{A}\sb n)\subseteq \mathscr{A}\sb{n+1},
$$
for all $n\geq 1$. We claim now that each $\mathscr{A}\sb n$ is separable. This is evident if $n=0$ since the closed linear span of $\mathcal{O}(a)$ is a separable Banach subspace of $\mathcal{E}$. By induction on $n$, and by the fact that $f$ is demicontinuous together with Mazur's theorem, we conclude that if $\mathscr{A}\sb n\subseteq\overline{\{ x\sb k\sp n\colon k\geq 1\}}$ for some $\{x\sb k\sp n\colon k\geq 1\}\subset \mathscr{A}\sb n$, then $f(\mathscr{A}\sb n)\subset \overline{\rm{co}}(f(x\sb k\sp n)\colon k\geq 1)$. This completes the proof of our claim. As a consequence, if we set $\mathscr{B}\sb k=\displaystyle\cap\sb{n=k}\sp\infty\mathscr{A}\sb n$, then the following closed
convex subset of $\mathscr{C}$
$$
\mathscr{D}=\overline{\displaystyle\cup\sb{k=0}\sp\infty\mathscr{B}\sb k},
$$
must be separable too. Notice that, since $\mathscr{C}$ is weakly compact, each $\mathscr{B}\sb k$
is nonempty. Moreover, it is easy to see that $f(\mathscr{B}\sb k)\subseteq \mathscr{B}\sb{k+1}$, for all $k\geq 1$. Hence, using again the fact that
$f$ is demicontinuous, we see that $\mathscr{D}$
is invariant under $f$. Finally, since $\mathscr{D}\subset \overline{\rm{span}}(\{d\sb j\colon j\geq 1\})$ for some dense sequence $\{d\sb j\}$ in $\mathscr{D}$, we reach the conclusion of theorem
by means of Corollary \ref{cor:3}.
\end{proof}

\begin{remark} Notice that Theorem \ref{trm:SolPb2} answers completely the question stated in Problem \ref{prob:2}.
\end{remark}

A very interesting problem in fixed point theory
consists of solving the Schauder conjecture for the class of nonexpansive mappings. That is, to show that every reflexive Banach space has the fixed point
property for nonexpansive self-mapping bounded closed convex
sets. This problem has been studied extensively since 1965 when Browder
\cite{Browder} and G\"ohde \cite{Gohde} independently proved that every
uniformly convex space has the aforementioned property, see for example \cite{Bailon,Kirk} and also the references
\cite{Dowling,Maurey,Sine, Smart} for other related results. Here, as a direct application of Theorem \ref{trm:SolPb2} we obtain the
following fixed point result for continuous maps in general Banach
spaces.

\begin{corollary}\label{cor:fp1} Let $\mathscr{C}$ be a weakly
compact convex subset of a Banach space $\mathcal{E}$ and $f\colon
\mathscr{C}\to\mathscr{C}$ a continuous mapping. Suppose that
$(I-f)(\mathscr{C})$ is sequentially weakly closed. Then $f$ has
fixed point.
\end{corollary}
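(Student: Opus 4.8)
The plan is to reduce the assertion directly to Theorem~\ref{trm:SolPb2}, which already produces a weak-approximate fixed point sequence, and then to upgrade such a sequence to a genuine fixed point by invoking the sequential weak closedness hypothesis. The first thing I would record is that every norm-continuous self-map is automatically demicontinuous: if $x_n\to x$ in norm then $f(x_n)\to f(x)$ in norm, and a fortiori $f(x_n)\rightharpoonup f(x)$ weakly. Hence $f$ lies within the scope of Theorem~\ref{trm:SolPb2}, and this is the only use I will make of continuity.

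Applying that theorem, I obtain a sequence $\{x_n\}$ in $\mathscr{C}$ with $x_n-f(x_n)\rightharpoonup 0$. Setting $y_n=(I-f)(x_n)=x_n-f(x_n)$, each $y_n$ belongs to $(I-f)(\mathscr{C})$ and the whole sequence satisfies $y_n\rightharpoonup 0$. Since $(I-f)(\mathscr{C})$ is assumed to be sequentially weakly closed, the weak limit $0$ must itself belong to $(I-f)(\mathscr{C})$. Consequently there exists $x^\ast\in\mathscr{C}$ with $(I-f)(x^\ast)=0$, that is $f(x^\ast)=x^\ast$, which is the desired fixed point.

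The argument is short precisely because the substantive analytic work is carried by Theorem~\ref{trm:SolPb2}, so I do not expect any real obstacle at this stage; the only point deserving care is the logical division of labour among the hypotheses. Continuity is used solely to secure demicontinuity, and hence the existence of the weak-approximate fixed point sequence, while the sequential weak closedness of $(I-f)(\mathscr{C})$ is exactly what converts the weak-approximate fixed point into a true zero of $I-f$. It is worth stressing that weak compactness of $\mathscr{C}$ does not by itself render $(I-f)(\mathscr{C})$ weakly closed, so this last assumption is genuinely needed: even though $y_n\rightharpoonup 0$, without it one could not conclude that the value $0$ is actually attained by $I-f$.
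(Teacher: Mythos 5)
Your proposal is correct and follows exactly the paper's own argument: invoke Theorem~\ref{trm:SolPb2} to produce a sequence with $x_n-f(x_n)\rightharpoonup 0$, then use sequential weak closedness of $(I-f)(\mathscr{C})$ to conclude $0\in(I-f)(\mathscr{C})$. Your explicit remark that norm-continuity implies demicontinuity (so that Theorem~\ref{trm:SolPb2} applies) is a small but welcome clarification that the paper leaves implicit.
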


\begin{proof} By Theorem \ref{trm:SolPb2}, there exists a sequence
$\{x\sb n\}$ in $\mathscr{C}$ such that $x\sb n-f(x\sb
n)\rightharpoonup 0$. By assumption, we get $0\in
(I-f)(\mathscr{C})$ and so $f(x)=x$, for some $x\in \mathscr{C}$.
This completes the proof.
\end{proof}

\begin{remark} It is worthwhile to note that in view of Theorem \ref{trm:Int1}, the sequential weak closedness assumption in Corollary
\ref{cor:fp1} can not be removed.
\end{remark}

Another fruitful field of research in fixed point theory concentrate
efforts in the direction of obtaining consistent results from
conditions imposed upon the iterates $f\sp m$ for $m$ sufficiently large.
Jones \cite{Jones1} introduced the term
 "asymptotic fixed point theorems" to describe such results.
One of the reasons for the usefulness of such
theorems lies in the fact that they are intrinsically related to the problem of finding periodic solutions
of ordinary differential equations, differential-difference equations, and
functional differential equations, see for instance
\cite{Halanay,Jones1,Jones2,Jones3,Yoshizawa}.\vspace{.1cm}

In \cite{Steinlein} Heinrich Steinlein formulated the following conjecture:
\begin{conjecture}
Let $\mathcal{E}$ be a Banach space, $\mathscr{C}\subset \mathcal{E}$ a
nonempty closed bounded convex set and $f\colon \mathscr{C}\to\mathscr{C}$
be a continuous map such that $f\sp m$ is compact for some $m\in\mathbb{N}$.
Then, $f$ has a fixed point.
\end{conjecture}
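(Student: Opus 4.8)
The plan is to reduce the conjecture to the construction of a single nonempty norm-compact convex set $K\subseteq\mathscr{C}$ that is carried into itself by $f$. Once such a $K$ is available, the proof closes immediately inside the framework of this paper: a norm-compact convex set is weakly compact, the image $(I-f)(K)$ is the continuous image of a compact set and hence norm-compact, so in particular sequentially weakly closed, and therefore Corollary \ref{cor:fp1}, applied to $f|_K$, produces a fixed point of $f$ in $K$. (One could equally invoke Tychonoff's theorem on $K$; the advantage of routing through Theorem \ref{trm:SolPb2} and Corollary \ref{cor:fp1} is that the weak-approximate machinery remains available should the invariant set we manage to build be only weakly compact, in which case a separable reduction as in the proof of Theorem \ref{trm:SolPb2}, followed by Corollary \ref{cor:3}, still applies.) Thus the whole difficulty is concentrated in producing the invariant convex set, and this is exactly where the hypothesis that $f^m$ be compact must be spent.

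To build a candidate for $K$ I would first exploit compactness of $f^m$ to localize the asymptotic dynamics. Since $f(\mathscr{C})\subseteq\mathscr{C}$ we have $f^{k+1}(\mathscr{C})\subseteq f^{k}(\mathscr{C})$, and because $f^{m}(\mathscr{C})$ is relatively compact the set $A=\overline{f^{m}(\mathscr{C})}$ satisfies $f^{m}(A)\subseteq A$; writing an arbitrary iterate modulo $m$ one checks that $\bigcup_{k\geq m}f^{k}(\mathscr{C})$ is contained in the norm-compact set $\bigcup_{r=0}^{m-1}f^{r}(A)$. Consequently the $\omega$-limit-type set
$$
\Omega=\bigcap_{N\geq m}\overline{\bigcup_{k\geq N}f^{k}(\mathscr{C})}
$$
is a decreasing intersection of nonempty norm-compact sets, hence nonempty and norm-compact, and a short continuity argument gives $f(\Omega)\subseteq\Omega$. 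The natural convex candidate is then $K=\overline{\rm co}(\Omega)$, which is norm-compact and convex because $\Omega$ is norm-compact in a Banach space.

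The main obstacle is precisely that $f(K)\subseteq K$ is not automatic: $f$ is only continuous, so it need not respect the passage to the closed convex hull, and this same failure defeats the more naive candidates $\bigcap_{n}\overline{\rm co}\,(f^{n}(\mathscr{C}))$ and the iterates $C_{0}=\mathscr{C}$, $C_{n+1}=\overline{\rm co}(f(C_{n}))$ — the latter are convex and $f$-invariant, but their Kuratowski measures of noncompactness cannot be driven to zero by compactness of $f^{m}$ alone, since a single application of $f$ to an enlarged convex hull may increase noncompactness. The route I would pursue to overcome this is a measure-of-noncompactness/fixed-point-index argument in the spirit of asymptotic fixed point theory, establishing that the maximal compact convex $f$-invariant subset of $\mathscr{C}$ is nonempty; I expect this step to be genuinely hard, and it is here, rather than in the topology, that the hypothesis ``$f^{m}$ compact'' is indispensable. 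With such an invariant $K$ in hand, the concluding step is painless: $K$ is weakly compact convex, $(I-f)(K)$ is norm-compact and hence sequentially weakly closed, so Theorem \ref{trm:SolPb2} furnishes a weak-approximate fixed point sequence in $K$ and Corollary \ref{cor:fp1} upgrades it to a genuine fixed point of $f$ in $K\subseteq\mathscr{C}$, which is the assertion of the conjecture.
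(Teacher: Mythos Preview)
The statement you are trying to prove is not a theorem of the paper; it is explicitly labelled a \emph{Conjecture} (attributed to Steinlein), and the paper does not claim to settle it. Immediately after stating the conjecture the author writes that only partial solutions are known and then proves a weaker result (the subsequent corollary) under the much stronger hypothesis that some iterate $f^m$ is \emph{strongly continuous} (i.e.\ weak-to-strong sequentially continuous), not merely compact. That corollary is proved in two lines: take a weak-approximate fixed point sequence $x_n$ from Theorem~\ref{trm:SolPb2}, pass to a weakly convergent subsequence $x_n\rightharpoonup x$, note $f(x_n)\rightharpoonup x$ as well, and then strong continuity of $f^m$ forces $f^{m+1}(x_n)\to f^{m+1}(x)$ and $f^{m}(x_n)\to f^{m}(x)$, so $p=f^m(x)$ satisfies $f(p)=p$. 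No invariant compact convex subset is ever constructed.

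Your proposal, by contrast, attacks the full conjecture, and you have correctly isolated the essential obstruction: once one has a nonempty \emph{norm-compact convex} $f$-invariant $K\subseteq\mathscr{C}$, Schauder (or your route through Corollary~\ref{cor:fp1}) finishes the job instantly. But your write-up does not produce such a $K$. You build the $\omega$-limit set $\Omega$ and then take $K=\overline{\rm co}(\Omega)$, and you honestly note that $f(K)\subseteq K$ need not hold, because a merely continuous $f$ does not commute with passage to convex hulls; you also observe that the alternative decreasing schemes fail to become compact. At that point you defer to ``a measure-of-noncompactness/fixed-point-index argument'' that you ``expect to be genuinely hard.'' That is exactly the gap: this is the whole content of the conjecture, and it is not supplied. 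So the proposal is not a proof, and there is no corresponding proof in the paper to compare it with --- the paper leaves the conjecture open and retreats to the strongly continuous case, where the weak-approximate fixed point sequence can be upgraded directly without ever needing an invariant compact convex set.
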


Partial solutions for this problem have been given, for instance, by Browder \cite{Browder2},
Nussbaum \cite{Nussbaum} and Steinlein \cite{Steinlein}. Our next result proves
this conjecture for the case when $f\sp m$ is strongly continuous for some $m$, i.e. $f\sp m(x\sb n)\to f\sp m(x)$ whenever that
$x\sb n\rightharpoonup x$. Note that in reflexive spaces strongly continuous maps are compact.

\begin{corollary} Let $\mathscr{C}$ be a weakly compact convex subset of a Banach space $\mathcal{E}$,
and let $f\colon\mathscr{C}\to\mathscr{C}$ be a demicontinuous mapping. Suppose that $f\sp m$ is
strongly continuous for some $m\in\mathbb{N}$. Then $f$ has a fixed point.
\end{corollary}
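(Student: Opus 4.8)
The plan is to deduce the existence of a fixed point directly from the weak-approximate fixed point sequence furnished by Theorem \ref{trm:SolPb2}, using the strong continuity of $f^m$ to convert weak convergence into norm convergence. First I would apply Theorem \ref{trm:SolPb2} to the demicontinuous map $f$ to obtain a sequence $\{x_n\}$ in $\mathscr{C}$ with $x_n - f(x_n) \rightharpoonup 0$. Since $\mathscr{C}$ is weakly compact, it is weakly sequentially compact by the Eberlein--\v{S}mulian theorem, so after passing to a subsequence I may assume $x_n \rightharpoonup x$ for some $x \in \mathscr{C}$; combined with $x_n - f(x_n) \rightharpoonup 0$ this also gives $f(x_n) \rightharpoonup x$.

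The heart of the argument is to produce two expressions for the limit of the single sequence $\{f^{m+1}(x_n)\}$ and then match them. Applying the strong continuity of $f^m$ to $x_n \rightharpoonup x$ yields $f^m(x_n) \to f^m(x)$ in norm, while applying it to $f(x_n) \rightharpoonup x$ yields $f^{m+1}(x_n) = f^m(f(x_n)) \to f^m(x)$ in norm. Hence both $\{f^m(x_n)\}$ and $\{f^{m+1}(x_n)\}$ converge in norm to $w := f^m(x) \in \mathscr{C}$.

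Now I would invoke demicontinuity of $f$: from $f^m(x_n) \to w$ in norm it follows that $f^{m+1}(x_n) = f(f^m(x_n)) \rightharpoonup f(w)$. But we already established $f^{m+1}(x_n) \to w$ in norm, hence $f^{m+1}(x_n) \rightharpoonup w$. By uniqueness of weak limits, $f(w) = w$, so $w = f^m(x)$ is a fixed point of $f$ lying in $\mathscr{C}$, which completes the proof.

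The main point — less an obstacle than the key observation — is recognizing that $\{f^{m+1}(x_n)\}$ admits two evaluations: as $f^m$ applied to the weakly convergent sequence $\{f(x_n)\}$, giving a \emph{norm} limit through strong continuity, and as $f$ applied to the norm-convergent sequence $\{f^m(x_n)\}$, giving a \emph{weak} limit through demicontinuity. Forcing these two limits to agree is exactly what pins down $w$ as fixed, and it elegantly avoids the trouble that $f$ is only norm-to-weak continuous and that the fixed point set of $f^m$ need not be convex.
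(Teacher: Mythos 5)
Your proposal is correct and follows essentially the same route as the paper: the paper's (much terser) proof also takes the weak-approximate fixed point sequence from Theorem \ref{trm:SolPb2}, extracts $x_n\rightharpoonup x$ via Eberlein--\v{S}mulian, notes $f(x_n)\rightharpoonup x$, and concludes $f(p)=p$ for $p=f^m(x)$ from strong continuity of $f^m$. Your write-up simply makes explicit the step the paper leaves implicit --- evaluating $f^{m+1}(x_n)$ both as $f^m(f(x_n))$ (norm limit $f^m(x)$, by strong continuity) and as $f(f^m(x_n))$ (weak limit $f(f^m(x))$, by demicontinuity) and matching the two limits --- which is exactly the intended argument.
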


\begin{proof} Let $\{x\sb n\}$ be a weak-approximate fixed point sequence for $f$, for example the one given in
Theorem \ref{trm:SolPb2}. From Eberlein-\v{S}mulian's theorem we conclude that up to a subsequence, still denoted by $x\sb n$, $x\sb n\rightharpoonup x$ in $\mathscr{C}$
for some $x\in \mathscr{C}$. In particular, $f(x\sb n)\rightharpoonup x$. Since $f\sp m$ is strongly continuous,
this implies that $f(p)=p$ where $p=f\sp m(x)$.
\end{proof}




\section{On Differential Equations in Reflexive Spaces}
In this section, we are concerned with the following vector-valued differential equation:
\begin{eqnarray}\label{prob:P}
\left\{
\begin{split}
&u\sb t = f(t,u)\ \text{ in } E, \\
&u(0)= u\sb 0\in E,
\end{split}\right.
\end{eqnarray}
where $t\in I=[0,T]$, $T>0$, $E$ is a reflexive Banach space and $f\colon I\times E\to E$. Here, the field
$f$ is assumed to be a Caracth\'eodory mapping, that is,
\begin{compactenum}

\item[($f\sb 1$)] for all $t\in I$, $f(t,\cdot)\colon E\to E$ is continuous,

\item[($f\sb 2$)] for all $x\in E$, $f(\cdot, x)\colon E\to E$ is measurable.

\end{compactenum}
\vspace{.2cm}

Differential equations in abstract spaces have been the object of thorough and fruitful study in many works. In \cite{Diudonne}
Diudonn\'e constructed an example of a continuous mapping $f\colon c\sb 0\to c\sb 0$  for
which (\ref{prob:P}) has no solution. Naturally, this leads us to suspect that the strong continuity is not, in general, the right assumption to solve this problem. In spite of this, Godunov \cite{Godunov} proved that for every infinite dimensional Banach
space there exists a continuous field $f$ such that (\ref{prob:P}) has no solution. Since then,
several approaches have been developed to establish the so called Peano's property
in more general spaces. One of them,
consists in considering the notion of continuity to the setting of
locally convex spaces, see \cite{Szep, Teixeira} and the references therein. In this
section, we explore another approach to (\ref{prob:P}). The basic idea is to weaken the notion
of solution in a way that allows us to derive general existence results even without
having additional conditions of continuity other than $(f\sb 1)$. To this aim, the
theory on weak-approximate fixed points for continuous mappings developed in previous section
will be invoked.
\vspace{.2cm}

We introduce the following notion of weak-approximate solution for (\ref{prob:P}).

\begin{definition}\label{def:5.1}(Limiting weak solutions.) We say that an $E$-valued
function $u\colon I\to E$ is a limiting-weak solution
to the problem (\ref{prob:P}) if $u\in C(I,E)$ and there exists a sequence $(u\sb n)$ in $C(I,E)$ such that\vspace{.1cm}

 \begin{compactenum}[(a)]

 \item $u\sb n\rightharpoonup u$ in $C(I,E)$,

 \item For each $t\in I$,
  $$
  u\sb 0+\int\sb 0\sp t f(s,u\sb n(s))ds\rightharpoonup u(t)\,\,\text{ in }\, E,
  $$

 \item  and, $u$ is almost everywhere strongly differentiable in $I$.

\end{compactenum}
\end{definition}

\begin{remark} The above integral is understood in Bochner sense.
\end{remark}

Of course, Definition \ref{def:5.1} was inspired by Theorem \ref{trm:SolPb2},
ensuring the existence of weak-approximate fixed point sequences for continuous maps. In our next result
we shall use this theorem to get an existence result of limiting-weak solutions to (\ref{prob:P}).

\begin{theorem}\label{trm:deq}
Let $E$ be a reflexive Banach space and $f\colon I\times E\to E$ be a Carath\'eodory mapping
satisfying
\begin{equation}\label{eqn:deq}
\|f(s,x)\|\leq \alpha(s) \varphi(\|x\|\sb E),\quad \text{ for a.e. } s\in I, \text{ and all } x\in E,
\end{equation}
where $\|\cdot\|\sb E$ denotes the norm of $E$, $\alpha\in L\sb p[0,T]$ for some $1<p<\infty$, and $\varphi\colon [0,\infty)\to (0,\infty)$ is nondecreasing
continuous function such that
$$
\int\sb 0\sp T \alpha(s)ds <\int\sb 0\sp\infty \frac{ds}{\varphi(s)}.
$$
Then, (\ref{prob:P}) has a limiting-weak solution.
\end{theorem}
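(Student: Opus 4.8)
The plan is to recast (\ref{prob:P}) as a weak-approximate fixed point problem for the Volterra-type integral operator $T\colon C(I,E)\to C(I,E)$ defined by $(Tu)(t)=u_0+\int_0^t f(s,u(s))\,ds$. Here the Bochner integral is well defined, since $(f_1)$--$(f_2)$ make $s\mapsto f(s,u(s))$ strongly measurable for $u\in C(I,E)$ and the growth bound (\ref{eqn:deq}) makes it integrable. A genuine fixed point of $T$ would be a strong solution, which need not exist; instead I would produce a \emph{weak-approximate} fixed point sequence via Theorem \ref{trm:SolPb2} and then read off a limiting-weak solution. For this I first build a nonempty closed convex set $\mathscr{C}\subset C(I,E)$ that is invariant under $T$ and weakly compact.

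The construction of $\mathscr{C}$ rests on two a priori estimates. First, the growth condition (\ref{eqn:deq}) together with the hypothesis $\int_0^T\alpha<\int_0^\infty ds/\varphi(s)$ yields a finite a priori bound by a Wintner-type comparison: comparing $t\mapsto\|(Tu)(t)\|$ with the scalar problem $M'(t)=\alpha(t)\varphi(M(t))$, $M(0)=\|u_0\|$, and using that $\varphi$ is nondecreasing, one obtains $\|(Tu)(t)\|\le M(t)\le R:=M(T)<\infty$ whenever $\|u(s)\|\le M(s)$ for all $s$. Second, for any such $u$ H\"older's inequality gives $\|(Tu)(t)-(Tu)(t')\|\le \varphi(R)\,\|\alpha\|_{L_p}\,|t-t'|^{\,1-1/p}$, so $T$ sends functions with pointwise bound $M(\cdot)$ into uniformly H\"older ones. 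Accordingly I set, with $L=\varphi(R)\|\alpha\|_{L_p}$,
$$\mathscr{C}=\Big\{u\in C(I,E): u(0)=u_0,\ \|u(t)\|\le M(t)\ \forall t,\ \|u(t)-u(t')\|\le L\,|t-t'|^{\,1-1/p}\ \forall t,t'\Big\}.$$
This set is convex, norm-closed (hence weakly closed by Mazur), contains the constant $u_0$, and is $T$-invariant. It is weakly compact by a weak form of the Arzel\`a--Ascoli theorem: $\mathscr{C}$ is bounded and uniformly equicontinuous, and for each $t$ the section $\{u(t):u\in\mathscr{C}\}$ is norm-bounded, hence relatively weakly compact because $E$ is reflexive. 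Finally $T$ is norm continuous on $\mathscr{C}$: if $u_n\to u$ uniformly then $f(s,u_n(s))\to f(s,u(s))$ for a.e. $s$ by $(f_1)$, dominated by $\alpha(s)\varphi(R)\in L_1$, so $\sup_t\|(Tu_n)(t)-(Tu)(t)\|\le\int_0^T\|f(s,u_n(s))-f(s,u(s))\|\,ds\to0$; in particular $T$ is demicontinuous.

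Now Theorem \ref{trm:SolPb2} applies and furnishes $\{u_n\}\subset\mathscr{C}$ with $u_n-Tu_n\rightharpoonup0$ in $C(I,E)$. Since $\mathscr{C}$ is weakly compact, Eberlein--\v{S}mulian lets me pass to a subsequence with $u_n\rightharpoonup u\in\mathscr{C}$, whence $Tu_n=u_n-(u_n-Tu_n)\rightharpoonup u$ as well. This already gives (a). For (b), fix $t$ and $x^*\in E^*$: the map $v\mapsto x^*(v(t))$ is linear and continuous on $C(I,E)$, so $x^*\big((Tu_n)(t)\big)\to x^*(u(t))$, i.e. $u_0+\int_0^t f(s,u_n(s))\,ds\rightharpoonup u(t)$ in $E$.

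The remaining point, (c), is where I would use reflexivity most essentially, and I expect it to be the main obstacle. The sequence $f(\cdot,u_n(\cdot))$ is bounded in $L_p(I,E)$ by $\varphi(R)\|\alpha\|_{L_p}$; since $E$ is reflexive and $1<p<\infty$, $L_p(I,E)$ is reflexive, so after a further subsequence $f(\cdot,u_n(\cdot))\rightharpoonup w$ in $L_p(I,E)$. Testing against $\mathbf{1}_{[0,t]}\otimes x^*\in L_{p'}(I,E^*)$ shows $\int_0^t f(s,u_n(s))\,ds\rightharpoonup\int_0^t w(s)\,ds$ in $E$; comparing with (b) and using uniqueness of weak limits yields $u(t)=u_0+\int_0^t w(s)\,ds$ with $w\in L_p(I,E)\subset L_1(I,E)$. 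Thus $u$ is an indefinite Bochner integral, hence absolutely continuous, and since a reflexive space has the Radon--Nikod\'ym property, $u$ is strongly differentiable a.e. (with $u'=w$), giving (c). Relabelling the final subsequence as $(u_n)$, conditions (a)--(c) hold simultaneously, so $u$ is the desired limiting-weak solution. The delicate ingredients to verify carefully are the weak Arzel\`a--Ascoli compactness of $\mathscr{C}$ and the clean bookkeeping of the comparison estimate producing $R$.
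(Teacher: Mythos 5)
Your proposal is correct, but it takes a genuinely different route from the paper's proof. Both arguments hinge on applying Theorem \ref{trm:SolPb2} to the Volterra operator on a weakly compact convex invariant set built from the Wintner comparison bound (your $M$ is exactly the paper's $b=J^{-1}\bigl(\int_0^{\cdot}\alpha\bigr)$); the difference is where the fixed-point argument lives. The paper works in $L_1(I,E)$: its invariant set consists of indefinite integrals of functions dominated by $\alpha(\cdot)\varphi(b(\cdot))$, weak compactness comes from Dunford's criterion (Theorem \ref{trm:Dunford}), and since weak convergence in $L_1(I,E)$ gives no pointwise information, the paper must bootstrap regularity: the a priori $L_p$ estimates show $\{u_n\}$ and $\{F(u_n)\}$ are bounded in the reflexive space $W^{1,p}(I,E)$, and the continuous embeddings into $L_1(I,E)$ and $C(I,E)$ transfer the weak limits to $C(I,E)$, yielding (a) and (b); property (c) then needs a second application of Dunford's theorem. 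You instead run everything in $C(I,E)$, so (a) and (b) fall out immediately from weak compactness of $\mathscr{C}$ and continuity of the functionals $v\mapsto x^*(v(t))$, you replace the second use of Dunford by reflexivity of $L_p(I,E)$, and the Sobolev machinery disappears. What your route costs is exactly the step you flag: weak compactness of a bounded, equicontinuous, pointwise weakly compact convex set in the Banach space $C(I,E)$. One caution: what Theorem \ref{trm:SolPb2} requires is compactness for the duality $\bigl(C(I,E),C(I,E)^*\bigr)$, not the more familiar weak-topology Arzel\`a--Ascoli statement about compactness in the topology of uniform weak convergence; these are different assertions. The one you need is nevertheless true for reflexive $E$: by Eberlein--\v{S}mulian and a diagonal argument using your H\"older bound, any sequence in $\mathscr{C}$ has a subsequence converging pointwise weakly to some $u\in\mathscr{C}$; then, representing each $\Lambda\in C(I,E)^*$ \`a la Singer as integration against an $E^*$-valued regular Borel measure $\mu$ of bounded variation, factoring $d\mu=h\,d|\mu|$ with $\|h(t)\|_{E^*}=1$ a.e.\ (possible because $E^*$, being reflexive, has the Radon--Nikod\'ym property), and applying dominated convergence, one upgrades pointwise weak convergence of the bounded sequence to weak convergence in $C(I,E)$. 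So your key lemma is provable with tools of roughly the same depth as Dunford's theorem, and should either be proved along these lines or attributed precisely. Two harmless blemishes: the Radon--Nikod\'ym property is not needed to differentiate $u(t)=u_0+\int_0^t w(s)\,ds$ almost everywhere, since indefinite Bochner integrals of $L_1$ functions are a.e.\ strongly differentiable in every Banach space; and, exactly as in the paper, the comparison function $M=b$ is finite on all of $I$ only under the slightly stronger reading $\int_0^T\alpha(s)\,ds<\int_{\|u_0\|_E}^{\infty}ds/\varphi(s)$ of the hypothesis.
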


For the proof of theorem we will rely on the following weak-compactness result of Dunford.

\begin{theorem}[Dundord]\label{trm:Dunford}  Let $(\Omega,\Sigma,\mu)$ be a finite measure space and $E$ be a Banach space such that both $E$ and $E\sp *$ have the Radon-Nikod\'ym property. A subset $\mathscr{C}$ of $L\sb 1(\mu,E)$ is relatively weakly compact if

\begin{compactenum}[(a)]

\item $\mathscr{C}$ is bounded,

\item $\mathscr{C}$ is uniformly integrable, and

\item for each $\Lambda\in \Sigma$, the set $\{\int\sb\Lambda ud\mu\colon u\in \mathscr{C}\}$ is relatively weakly compact.
\end{compactenum}
\end{theorem}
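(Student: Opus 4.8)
The plan is to establish relative weak compactness of $\mathscr{C}$ through the sequential criterion provided by the Eberlein--\v{S}mulian theorem: it suffices to show that every sequence $(u_n)$ drawn from $\mathscr{C}$ admits a subsequence that converges weakly in $L_1(\mu,E)$. Since each $u_n$ is Bochner integrable, it is strongly measurable, hence (by the Pettis measurability theorem) essentially separably valued; the countable union of the essential ranges therefore lies in a fixed closed separable subspace $E_0\subseteq E$, and I may treat the $u_n$ as $E_0$-valued. Crucially, $E_0$ inherits the Radon--Nikod\'ym property from $E$. I would then fix a countable subalgebra $\mathcal{A}\subseteq\Sigma$ generating $\Sigma$ and, invoking hypothesis (c) together with Eberlein--\v{S}mulian in $E$ and a diagonal extraction over $\mathcal{A}$, pass to a subsequence (still written $(u_n)$) along which the integrals $F_n(\Lambda):=\int_\Lambda u_n\,d\mu$ converge weakly in $E$ for every $\Lambda\in\mathcal{A}$.

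Next I would manufacture the candidate limit. Hypothesis (b) forces the family $\{F_n\}$ to be uniformly $\mu$-continuous and uniformly countably additive; combining this with weak convergence on the generating algebra $\mathcal{A}$, a Vitali--Hahn--Saks type argument shows that the weak limit $F(\Lambda):=\lim_n F_n(\Lambda)$ actually exists for \emph{every} $\Lambda\in\Sigma$ and defines a countably additive $E_0$-valued measure $F$ of bounded variation with $F\ll\mu$. Because $E_0$ has the Radon--Nikod\'ym property, $F$ admits a density: there is $u\in L_1(\mu,E)$ with $F(\Lambda)=\int_\Lambda u\,d\mu$ for all $\Lambda\in\Sigma$. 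This $u$ is the designated weak limit.

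It remains to verify $u_n\rightharpoonup u$ in $L_1(\mu,E)$. Here I would use the second Radon--Nikod\'ym hypothesis: since $E^*$ has the RNP, the dual of $L_1(\mu,E)$ is isometrically $L_\infty(\mu,E^*)$, so weak convergence reduces to showing $\int_\Omega\langle u_n,g\rangle\,d\mu\to\int_\Omega\langle u,g\rangle\,d\mu$ for every $g\in L_\infty(\mu,E^*)$. For a simple integrand $g=\sum_i\chi_{\Lambda_i}x_i^*$ this is immediate, since $\int_\Omega\langle u_n,g\rangle\,d\mu=\sum_i x_i^*\bigl(F_n(\Lambda_i)\bigr)\to\sum_i x_i^*\bigl(F(\Lambda_i)\bigr)=\int_\Omega\langle u,g\rangle\,d\mu$ by the weak convergence of the integrals secured above.

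The main obstacle is the passage from simple to arbitrary $g\in L_\infty(\mu,E^*)$. Naively one would approximate $g$ in the $L_\infty$-norm by simple $E^*$-valued functions, but this density can fail when $E^*$ is nonseparable. The remedy is to exploit uniform integrability a second time: since every $u_n$ and $u$ is essentially valued in the separable space $E_0$, the pairing $\langle u_n,g\rangle$ only sees the behaviour of $g$ on a $\mu$-a.e. separable portion of the range, which can be approximated by a simple function up to an error controlled by $\int_\Lambda\|u_n\|\,d\mu$ uniformly in $n$; a standard $3\varepsilon$ estimate then pushes the convergence through. Reconciling the nonseparability of $E^*$ with the separably valued integrands, so that this approximation is uniform over the whole sequence, is the delicate point of the argument, and it is precisely there that the two structural hypotheses on $E$ and $E^*$ must be brought to bear together.
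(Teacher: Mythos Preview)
The paper does not prove this statement at all: Theorem~\ref{trm:Dunford} is quoted as a classical weak-compactness criterion attributed to Dunford and is used as a black box in the proof of Theorem~\ref{trm:deq}. There is therefore no ``paper's own proof'' to compare your proposal against.

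For what it is worth, your outline follows the standard route to this result (reduction to a separable subspace via Pettis measurability, diagonal extraction of weak limits of the integrals, Vitali--Hahn--Saks to obtain a countably additive limit measure, RNP of $E$ to produce the density, and the identification $L_1(\mu,E)^*\cong L_\infty(\mu,E^*)$ via RNP of $E^*$ to test weak convergence). One small technical slip: you ``fix a countable subalgebra $\mathcal{A}\subseteq\Sigma$ generating $\Sigma$,'' but $\Sigma$ need not be countably generated. The usual repair is to work instead with the sub-$\sigma$-algebra generated by the (essentially separably valued, strongly measurable) sequence $(u_n)$, which is countably generated up to $\mu$-null sets; the rest of your argument then goes through on that sub-$\sigma$-algebra, which is all that is needed.
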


\begin{remark} Notice that every reflexive space has the Radon-Nikod\'ym property. In particular, both $E$ and $E\sp *$ have this property if $E$ is reflexive.
\end{remark}

\begin{proof}[Proof of Theorem \ref{trm:deq}] Let us consider $(\Omega,\Sigma,\mu)$ the usual Lebesgue measure space on $I$ and denote by $L\sb 1(I,E)$ the standard Banach space of all equivalence classes of $E$-valued Bochner integrable functions $u$ defined on $I$ equipped with its usual norm $\|\cdot\|\sb 1$. In what follows we shall use the following notations
\begin{equation*}
\begin{split}
&\mathcal{A}=\{u\in L\sb 1(I,E)\colon \|u(t)\|\sb E\leq b(t) \text{ for a.e. } t\in I\},\\ \\
&\mathcal{B}=\{ v\in L\sb 1(I,E)\colon \|v(t)\|\sb E\leq \alpha(t)\varphi(b(t))\text{ for a.e. } t\in I\},
\end{split}
\end{equation*}
where
$$
b(t)=J\sp{-1}\Big(\int\sb 0\sp t\alpha(s)ds\Big) \quad \text{ and }
\quad J(z)=\int\sb{\|u\sb 0\|\sb E}\sp z\frac{1}{\varphi(s)}ds.
$$
A straightforward computation shows that both $\mathcal{A}$ and $\mathcal{B}$ are convex. Also, as is readily seen, $\mathcal{A}$ is closed in $L\sb 1(I,E)$. Moreover, since $E$ is a reflexive space, we can apply Dunford's theorem to conclude that $\mathcal{B}$ is a relatively weakly compact set in $L\sb 1(I,E)$.
Let us consider now the set
$$
\mathscr{C}=\{ u\in \mathcal{A}\colon u(t)=u\sb 0+\int\sb 0\sp t \overline{u}(s)ds \text{ for a.e. } t\in I, \text{ and some } \overline{u} \in\mathcal{B}\}.
$$
It is easy to see that $\mathscr{C}$ is nonempty and convex. We claim now that $\mathscr{C}$ is closed. Indeed, let $\{ u\sb n\}$ be a sequence in $\mathscr{C}$ such that $u\sb n\to u$ in $L\sb 1(I,E)$. Then
$$
u\sb n(t)\rightarrow u(t)\,\, \text{  in } E
$$
for a.e. $t\in I$.
In particular,
$u\in \mathcal{A}$. On the other hand, since $\mathcal{B}$ is sequentially weakly compact and
$$
u\sb n(t)=u\sb 0+\int\sb 0\sp t \overline{u}\sb n(s)ds\,\, \text{ for a.e. } t\in I,
$$
with $\overline{u}\sb n\in \mathcal{B}$, $n\geq 1$, we may assume that $\{\overline{u}\sb n\}$ converges weakly to some $\overline{u}\in L\sb 1(I,E)$. Then, by fixing any $\phi\in E\sp *$ and taking into account that each $\int\sb 0\sp t \langle \phi,\cdot\rangle ds$ defines a bounded linear functional on $L\sb 1(I,E)$, it follows that
$$
\langle\phi,u(t)-u\sb 0\rangle=\lim\sb{n\to\infty}\langle \phi,\int\sb 0\sp t \overline{u}\sb n(s)ds\rangle=\lim\sb{n\to\infty}\int\sb 0\sp t\langle \phi,\overline{u}\sb n(s)\rangle ds= \int\sb 0\sp t\langle\phi,\overline{u}(s)\rangle ds,
$$
for a.e. $t\in I$. Hence $\langle \phi,u(t)-u\sb 0\rangle=\langle \phi,\int\sb 0\sp t \overline{u}(s)ds\rangle$ for a.e. $t\in I$. This implies that
$$
u(t)=u\sb 0+\int\sb 0\sp t \overline{u}(s)ds,\quad \text{ for a.e. } t\in I,
$$
since $\phi$ was arbitrary.
It remains to show that $\overline{u}\in \mathcal{B}$. To this end, it suffices to apply Mazur's theorem since $\mathcal{B}$ is closed in $L\sb 1(I,E)$ and $\overline{u}\sb n\rightharpoonup u$ in $L\sb 1(I,E)$. This concludes the proof that $\mathscr{C}$ is closed.

Thus, by applying once more Dunford's theorem, we reach the conclusion that $\mathscr{C}$ is weakly compact in $L\sb 1(I,E)$. Let us define now a mapping $F\colon \mathscr{C}\to \mathscr{C}$ by
$$
F(u)(t)=u\sb 0+\int\sb 0\sp t f(s,u(s))ds.
$$
From now on, our strategy will be to obtain a weak-approximate fixed point sequence for $F$ in $L\sb 1(I,E)$ and then deduce that it is itself a weak-approximation of fixed points for $F$ in $W\sp{1,p}(I,E)$, the Sobolev space consisting of all $u\in L\sb p(I,E)$ such that $u'$ exists in the weak sense and belongs to $L\sb p(I,E)$. After this we will use the fact that the embedding $W\sp{1,p}(I,E)\hookrightarrow C(I,E)$ is continuous to recover the corresponding weak convergence in $C(I,E)$.

By using $(f\sb 1)$-$(f\sb 2)$, we see that $F$ is well-defined and that it is continuous with respect to the norm-topology of $L\sb 1(I,E)$. The last assertion follows easily from Lebesgue's theorem on dominated convergence. According to Theorem \ref{trm:SolPb2}, there exists a sequence $\{u\sb n\}$ in $\mathscr{C}$ so that $u\sb n-F(u\sb n)\rightharpoonup 0$ in $L\sb 1(I,E)$. Observe that $u\sb n\in C(I,E)$ for all $n\geq 1$. Moreover, up to subsequences, we may assume that both $\{u\sb n\}$ and $\{F(u\sb n)\}$ converge in the weak topology of $L\sb 1(I,E)$ to some $u$ in $\mathscr{C}$. We claim now that $u\sb n-F(u\sb n)\rightharpoonup 0$ in $C(I,E)$. Before proving this, let us make a pause to get a priori $L\sb p$-estimates for arbitrary functions  $u\in\mathscr{C}$.

\subsection{$L\sb p(I,E)$-Estimates.}  Fix any $u\in \mathscr{C}$:\vspace{.1cm}

\begin{compactenum}[(1)]

\item Using $(f\sb 2)$ we have
\begin{eqnarray}\nonumber
\|F(u)\|\sb{L\sb p}&\leq& \|u\sb 0\|\sb E |I|\sp{1/p}+\Big\{\int\sb 0\sp T\Big\|\int\sb 0\sp tf(s,u(s))ds\Big\|\sb E\sp pdt\Big\}\sp{1/p}\\\nonumber
&\leq & \|u\sb 0\|\sb E |I|\sp{1/p}+\Big\{\int\sb 0\sp T\Big(\int\sb 0\sp t\|f(s,u(s))\|\sb Eds\Big)\sp pdt\Big\}\sp{1/p}\\\nonumber
&\leq &\|u\sb 0\|\sb E |I|\sp{1/p}+\Big\{\int\sb 0\sp T\Big(\int\sb 0\sp t\alpha(s)\varphi(b(s))ds\Big)\sp p dt\Big\}\sp{1/p}\\\nonumber
&\leq & \|u\sb 0\|\sb E|I|\sp{1/p}+\|\alpha\|\sb{L\sb 1[0,T]}\varphi(\|b\|\sb\infty)T\sp{1/p},
\end{eqnarray}
where $\|b\|\sb\infty$ denotes the supremum norm of $b$ on $I$.

\item
Analogously, one can shows that
\begin{eqnarray}\nonumber
\|u\|\sb{L\sb p}\leq \|u\sb 0\|\sb E |I|\sp{1/p}+\|\alpha\|\sb{L\sb 1[0,T]}\varphi(\|b\|\sb \infty) T\sp{1/p}.
\end{eqnarray}

\item It follows now from $(f\sb 2)$ and the $L\sb p$-assumption on $\alpha$ that
\begin{eqnarray}
\|\partial\sb t F(u)\|\sb{L\sb p}\leq \varphi(\|b\|\sb \infty)\|\alpha\|\sb{L\sb p[0,T]},
\end{eqnarray}
and
\begin{eqnarray}
\|\partial\sb t u\|\sb{L\sb p}\leq\varphi(\|b\|\sb \infty)\|\alpha\|\sb{L\sb p[0,T]}.
\end{eqnarray}
\end{compactenum}
In consequence, the above estimates show that both $\{u\sb n\}$ and $\{F(u\sb n)\}$ are bounded sequences in $W\sp{1,p}(I,E)$. In view of the reflexivity of $W\sp{1,p}(I,E)$, by passing to a subsequence, if necessary, we can find $v,w\in W\sp{1,p}(I,E)$ such that $u\sb n\rightharpoonup v$ and $F(u\sb n)\rightharpoonup w$ in $W\sp{1,p}(I,E)$. In particular, $u=v=w$ since the embedding $W\sp{1,p}(I,E)\hookrightarrow L\sb 1(I,E)$ is continuous. Thus, $u\sb n-F(u\sb n)\rightharpoonup 0$ in $W\sp{1,p}(I,E)$. On the other hand, using now the fact the embedding $W\sp{1,p}(I,E)\hookrightarrow C(I,E)$ is also continuous, it follows that
\begin{eqnarray}
u\sb n-F(u\sb n)\rightharpoonup 0 &\text{ in }& C(I,E),\,\,{ and }\\
u\sb n\rightharpoonup u &\text{ in }& C(I,E).
\end{eqnarray}
Therefore
\begin{equation}\label{eqn:1deq}
u\sb 0+\int\sb 0 \sp t f(s,u\sb n(s))ds\rightharpoonup u(t)\,\,\text{ in }\, E,
\end{equation}
for all $t\in I$, which proves $(a)$ and $(b)$ of Definition \ref{def:5.1}. It remains to prove the optimal regularity of the limiting-weak solution $u$. To this end, we may apply again Dunford's theorem to conclude that
$$
K=\{ f(\cdot, u\sb n(\cdot))\colon n\in\mathbb{N}\}
$$
is relatively weakly compact in $L\sb 1(I,E)$. Hence,
by passing to a subsequence if necessary, we get
\begin{equation}\label{eqn:2deq}
\int\sb 0\sp t f(s,u\sb n(s))ds\rightharpoonup \int\sb 0\sp t v(s)ds\,\, \text{ in } E,
\end{equation}
for all $t\in I$ and some $v\in L\sb 1(I,E)$. Combining (\ref{eqn:1deq}) and (\ref{eqn:2deq})
 it follows that
$$
u(t)=u\sb 0+\int\sb 0\sp t v(s)ds,
$$
for all $t\in I$. Hence, following the same arguments as in \cite{Teixeira},
one can prove that $u$ is almost everywhere strongly differentiable in $I$. This completes the proof of Theorem \ref{trm:deq}.
\end{proof}

\begin{remark} It is worthwhile to point out that in view of Godunov's result
Theorem \ref{trm:deq} can be the best possible.
\end{remark}

\section*{Acknowledgements} The author would like to thank the two previous reviewers of the original manuscript. Their constructive suggestions and comments improved significantly the final outcome of this paper. The author would like also to thank Professor Roger D. Nussbaum for discussions and encouragements.

\end{document}